\setlist[enumerate]{nosep}
\definecolor{labelkey}{rgb}{0,0.08,0.45}
\definecolor{refkey}{rgb}{0,0.6,0.0}
\definecolor{Brown}{rgb}{0.45,0.0,0.05}
\definecolor{lime}{rgb}{0.00,0.8,0.0}
\definecolor{lblue}{rgb}{0.5,0.5,0.99}
\definecolor{rowalt}{HTML}{dde7f0}
\definecolor{tabheadfoot}{HTML}{C0C0C0}
\colorlet{hlcyan}{cyan!30}
\colorlet{hlred}{red!40}
\def\namedlabel#1#2{\begingroup
   \def\@currentlabel{#2}%
   \label{#1}\endgroup
}
\newcommand{\sepp}{\setlength{\itemsep}{-2pt}}
\newcommand{\menge}[2]{\big\{{#1}~\big |~{#2}\big\}}
\newcommand{\fenv}[1]%
{\ensuremath{\,\overrightarrow{\operatorname{env}}_{#1}}}
\newcommand{\benv}[1]%
{\ensuremath{\,\overleftarrow{\operatorname{env}}_{#1}}}
\newcommand{\scal}[2]{\left\langle{#1},{#2}  \right\rangle}
\newcommand{\bscal}[2]{\big\langle{#1},{#2}\big\rangle}
\newcommand{\RR}{\ensuremath{\mathbb R}}
\newcommand{\NN}{\ensuremath{\mathbb N}}
\newcommand{\ran}{\ensuremath{{\operatorname{ran}}\,}}
\newcommand{\Id}{\ensuremath{\operatorname{Id}}}
\crefname{equation}{}{equations}
\crefname{chapter}{Appendix}{chapters}
\crefname{item}{}{items}
\crefname{enumi}{}{}
\theoremstyle{definition}
\newtheorem{theorem}{Theorem}[section]
\newtheorem{corollary}[theorem]{Corollary}
\newtheorem{proposition}[theorem]{Proposition}
\newtheorem{example}[theorem]{Example}
\newtheorem{remark}[theorem]{Remark}
\DeclarePairedDelimiter{\lVerts}{\lVert}{\rVert}
\newcommand{\norm}[1]{\ensuremath{\lVerts*{#1}}}
\providecommand{\RR}{\mathbb{R}}
\providecommand{\ran}{\operatorname{ran}}
\providecommand{\Id}{\operatorname{{ Id}}}
\providecommand{\NN}{\mathbb{N}}
\providecommand{\ran}{\operatorname{ran}}
\providecommand{\Id}{\operatorname{Id}}
\providecommand{\RR}{\mathbb{R}}
\providecommand{\NN}{\mathbb{N}}
\providecommand{\mA}{\mathcal{A}}
\providecommand{\mH}{\mathcal{H}}
\definecolor{myblue}{rgb}{.8, .8, 1}
\begin{document}

\title{\textsc{
Projecting onto rectangular matrices with 
prescribed row and column sums
}}

\author{
Heinz H.\ Bauschke\thanks{
Mathematics, University
of British Columbia,
Kelowna, B.C.\ V1V~1V7, Canada. E-mail:
\texttt{heinz.bauschke@ubc.ca}.},~
Shambhavi Singh\thanks{
Mathematics, University
of British Columbia,
Kelowna, B.C.\ V1V~1V7, Canada. E-mail:
\texttt{sambha@student.ubc.ca}.},~ 
and
Xianfu Wang\thanks{
Mathematics, University
of British Columbia,
Kelowna, B.C.\ V1V~1V7, Canada. E-mail:
\texttt{shawn.wang@ubc.ca}.}
}

\date{October 15, 2021}

\maketitle

\vskip 8mm

\begin{abstract}
{ 
In 1990, Romero presented a beautiful formula for the projection 
onto the set of rectangular matrices with prescribed row and column sums. 
Variants of Romero's formula have been rediscovered by 
Khoury and by Glunt, Hayden, and Reams, for bistochastic (square) matrices
in 1998. 
These results have
found various generalizations and applications. 
}

In this paper, we provide a formula for the more general problem of finding the projection onto the set of 
rectangular matrices with prescribed \emph{scaled} row and column sums. 
Our approach is based on computing the Moore-Penrose inverse of a certain linear operator associated with the problem. In fact, our analysis holds even for Hilbert-Schmidt operators 
{  and we do not have to assume consistency}. We also perform  
numerical experiments featuring the new projection operator. 
\end{abstract}

{
\noindent
{\bfseries 2020 Mathematics Subject Classification:}
{Primary 
90C25;
Secondary 
15A10, 
15B51, 
47L07.
}

\noindent {\bfseries Keywords:}
generalized bistochastic matrices,
Hilbert-Schmidt operators, 
Moore-Penrose inverse, 
projection,
rectangular matrices, 
{  transportation polytope}. 

\section{Motivation}

A matrix in $\RR^{n\times n}$ is called \emph{bistochastic} if  all entries of it are nonnegative and all its row and column sums equal $1$. 
More generally, a matrix is \emph{generalized bistochastic} 
if the requirement on nonnegativity is dropped. 
The bistochastic matrices form a convex polytope $B$, 
commonly called the \emph{Birkhoff polytope}, 
in $\RR^{n\times n}$, with its extreme points being 
the permutation matrices (a seminal result due to 
Birkhoff and von Neumann). 
A lovely formula provided in 1998 by Khoury \cite{Khoury} 
--- and also by Glunt et al.~\cite{Gluntetal1} --- 
gives 
the projection { of} any matrix onto $G$, the affine subspace of 
generalized bistochastic matrices 
(see \cref{ex:Takouda} below). 
{ 
More generally, nonnegative \emph{rectangular} matrices with prescribed
row and column sums are called \emph{transportation polytopes}. 
If the nonnegativity assumption is dropped, then 
Romero provided already in 1990 an explicit formula 
(see \cref{r:Romero} below) which even predates the square case!
}
On the other hand, 
the projection onto the set of nonnegative matrices $N$ is simple --- just replace every negative entry by $0$. 
No explicit formula is known to project a matrix onto the set 
of bistochastic matrices; however, because $B=G\cap N$,
one may apply algorithms such as Dykstra's algorithm to iteratively approximate the projection onto $B$ by using 
the projection operators $P_G$ and $P_N$ (see, 
e.g., Takouda's \cite{Takouda05} for details). 
{ 
In the case of transportation polytopes, algorithms
which even converge in \emph{finitely many steps}
were provided by Calvillo and Romero \cite{CaRo}.
}

{\it The goal of this paper is to provide explicit projection 
operators in more general settings.} Specifically, we present 
a projection formula for finding the {\it projection onto 
the set of rectangular matrices
with prescribed \emph{scaled} row and column sums}. 
{  Such problems arise, e.g., in \emph{discrete tomography} \cite{wiki:dt}} {  and the study of \emph{transportation polytopes} \cite{CaRo}.}
Our approach uses
the Moore-Penrose inverse of a certain linear operator $\mA$. 
It turns out
that our analysis works even for Hilbert-Schmidt operators
because the range of $\mA$ can be determined and seen to be 
closed. Our main references are \cite{BC2017}, \cite{KadiRing1} 
{  (for Hilbert-Schmidt operators)}, 
and \cite{Groetsch} {  (for the Moore-Penrose inverse). 
We also note that consistency is not required.}

The paper is organized as follows.
After recording a useful result involving the Moore-Penrose 
inverse at the end of this section, 
we prove our main results in \cref{sec:main}. 
These result are then specialized to rectangular matrices 
in \cref{sec:matrices}. 
We then turn to numerical experiments in \cref{sec:numerics}
{ 
where we compare the performance of three popular algorithms: 
Douglas-Rachford, Method of Alternating Projections, and Dykstra.
}

We conclude this introductory section with a result
which we believe to be part of the folklore (although 
we were not able to pinpoint a crisp reference). 
{ 
It is formulated using the \emph{Moore-Penrose inverse} 
of an operator --- for the definition of the 
Moore-Penrose inverse and its basic properties, 
see \cite{Groetsch} (and also \cite[pages 57--59]{BC2017}
 for a crash course). 
The formula presented works even in the case when the problem
is inconsistent and automatically provides a least squares solution.
}

\begin{proposition}
\label{p:210416a}
Let $A\colon X\to Y$ be a continuous linear operator 
with closed range between two real Hilbert spaces. 
Let $b\in Y$, set $\bar{b}:= P_{\ran A}b$,
and set $C := A^{-1}\bar{b}$.
Then
\begin{equation}
(\forall x\in X)\quad P_Cx = x - A^\dagger(Ax-b),
\end{equation}
{ 
where $A^\dagger$ denotes the Moore-Penrose inverse of $A$.
}
\end{proposition}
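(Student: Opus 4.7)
The plan is to reduce the projection onto $C=A^{-1}\bar b$ to a standard projection onto an affine subspace, and then rewrite the resulting expression in terms of $A^\dagger$ using the fundamental identities of the Moore--Penrose inverse.

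First I would observe that since $\bar b = P_{\ran A} b \in \ran A$ (which is closed by hypothesis), the set $C=A^{-1}\bar b$ is nonempty. Picking any $x_0\in C$, one has $Ax_0=\bar b$ and $C = x_0 + \ker A$, a closed affine subspace of $X$. Therefore the standard formula for projection onto a translate of a closed linear subspace gives
\begin{equation*}
P_C x \;=\; x_0 + P_{\ker A}(x-x_0) \;=\; x - P_{(\ker A)^\perp}(x-x_0).
\end{equation*}

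Next I would invoke two well-known properties of the Moore--Penrose inverse of a bounded operator with closed range (see, e.g., \cite{Groetsch}): namely $A^\dagger A = P_{(\ker A)^\perp}$ and $\ker A^\dagger = (\ran A)^\perp$. The first gives
\begin{equation*}
P_{(\ker A)^\perp}(x-x_0) \;=\; A^\dagger A(x-x_0) \;=\; A^\dagger(Ax - \bar b).
\end{equation*}
The second lets me replace $\bar b$ by $b$: since $b-\bar b = b - P_{\ran A}b \in (\ran A)^\perp = \ker A^\dagger$,
\begin{equation*}
A^\dagger(Ax - b) \;=\; A^\dagger(Ax-\bar b) + A^\dagger(\bar b - b) \;=\; A^\dagger(Ax-\bar b).
\end{equation*}
Combining these with the displayed formula for $P_Cx$ yields the desired identity.

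I do not expect a serious obstacle here; the argument is essentially bookkeeping once one recognizes that $C$ is an affine subspace parallel to $\ker A$. The only point that deserves care is the justification that $\bar b \in \ran A$ (so that $C\neq\varnothing$) and the use of closedness of $\ran A$ to guarantee both that $(\ker A)^\perp = \overline{\ran A^*}=\ran A^*$ and that the Moore--Penrose identities $A^\dagger A = P_{(\ker A)^\perp}$ and $\ker A^\dagger = (\ran A)^\perp$ are available in the stated form.
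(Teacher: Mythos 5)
Your proposal is correct, and its overall strategy coincides with the paper's: first obtain the projection formula onto $C$ with the ``corrected'' right-hand side $\bar b$, then show that $\bar b$ may be replaced by $b$ because the Moore--Penrose inverse annihilates the discrepancy $b-\bar b\in(\ran A)^\perp$. The difference is one of self-containedness: the paper simply cites the consistent-case formula $P_Cx = x - A^\dagger(Ax-\bar b)$ (Example~29.17(ii) of \cite{BC2017}), whereas you derive it from scratch by writing $C=x_0+\ker A$ for any $x_0\in C$ and using $A^\dagger A = P_{(\ker A)^\perp}$, which is a nice, more elementary unpacking of that reference. For the second step the two arguments are equivalent in substance: you invoke $\ker A^\dagger=(\ran A)^\perp$ to kill $A^\dagger(\bar b - b)$, while the paper computes $A^\dagger\bar b = A^\dagger P_{\ran A}b = A^\dagger AA^\dagger b = A^\dagger b$ using $AA^\dagger=P_{\ran A}$ and $A^\dagger AA^\dagger=A^\dagger$ --- exactly the identities that prove the kernel fact you quote. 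Your closing remarks about why closedness of $\ran A$ is needed (nonemptiness of $C$ and availability of the stated MP identities) are also on point, so there is no gap.
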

\begin{proof}
Clearly, $\bar{b}\in \ran A$;
hence, $C\neq\varnothing$.
Let $x\in X$. 
It is well known 
(see, e.g., \cite[Example~29.17(ii)]{BC2017}) 
that 
\begin{equation}
P_{C}x = x - A^\dagger(Ax-\bar{b}).
\end{equation}
On the other hand,
\begin{equation}
A^\dagger\bar{b}
=A^\dagger P_{\ran A}b
=A^\dagger AA^\dagger b%
= A^\dagger b
\end{equation}
using the fact that 
$AA^\dagger = P_{\ran A}$ 
(see, e.g., \cite[Proposition~3.30(ii)]{BC2017})
and $A^\dagger A A^\dagger = A^\dagger$ 
(see, e.g., \cite[Section~II.2]{Groetsch}).
Altogether, $P_Cx=x-A^\dagger(Ax-b)$ as claimed.
\end{proof}

\section{Hilbert-Schmidt operators}

\label{sec:main}

From now on, we assume that 
\begin{equation}
\text{
$X$ and $Y$ are 
two real Hilbert spaces},
\end{equation}
which in turn give rise to the real Hilbert space 
\begin{equation}
\mathcal{H} := 
\menge{T\colon X\to Y}{\text{$T$ is Hilbert-Schmidt}}.
\end{equation} 
{  
Hilbert-Schmidt operators encompass rectangular matrices 
--- even with infinitely many entries as long as these are
square summable --- as well as certain integral operators.}
(We refer the reader to 
\cite[Section~2.6]{KadiRing1} for basic results
on Hilbert-Schmidt operators
{ 
and also recommend \cite[Section~VI.6]{ReedSimon}.})
{ 
Moreover,}
$\mathcal{H}$ (is generated by and) contains
{\it rank-one} operators of the form 
\begin{equation}
(v\otimes u)\colon X \to Y \colon x\mapsto \scal{u}{x}v, 
\end{equation}
where $(v,u)\in Y\times X$, and with adjoint
\begin{equation}
\label{e:210413a}
(v\otimes u)^*\colon Y \to X \colon y\mapsto \scal{v}{y}u
\end{equation}
and 
\begin{equation}
\label{e:210419a}
(v\otimes u)^*=u\otimes v.
\end{equation}
Moreover,
\begin{equation}
\label{e:210419b}
u\otimes u = \|u\|^2P_{\,\RR u}
\quad\text{and}\quad
v\otimes v = \|v\|^2P_{\,\RR v}. 
\end{equation}
For the rest of the paper, we fix 
\begin{equation}
\text{$e\in X$ and $f \in Y$,}
\end{equation}
and set 
\begin{equation}
\label{e:defA}
\mathcal{A}
\colon \mathcal{H}\to  Y\times X
\colon T \mapsto (Te,T^*f).
\end{equation}
{ 
\begin{proposition}
$\mathcal{A}$ is a continuous linear operator
and 
$\displaystyle \|\mA\| = \sqrt{\|e\|^2+\|f\|^2}$. 
\end{proposition}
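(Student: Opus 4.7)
The plan is to establish the two claims separately: linearity/continuity (together with the upper bound on the norm) first, then a matching lower bound via an explicit rank-one test operator.

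Linearity of $\mathcal{A}$ is immediate from the linearity of the maps $T\mapsto Te$ and $T\mapsto T^*f$, together with the fact that $T\mapsto T^*$ is linear on $\mathcal{H}$. For the upper bound, I would recall the standard inequality $\|T\|_{\mathrm{op}}\le\|T\|_{\mathrm{HS}}$ (valid on $\mathcal{H}$, see \cite[Section~2.6]{KadiRing1}), together with the fact that $\|T^*\|_{\mathrm{HS}}=\|T\|_{\mathrm{HS}}$. Then for any $T\in\mathcal{H}$,
\begin{equation}
\|\mathcal{A}T\|^2=\|Te\|^2+\|T^*f\|^2\le\|T\|_{\mathrm{op}}^2\|e\|^2+\|T^*\|_{\mathrm{op}}^2\|f\|^2\le\|T\|_{\mathrm{HS}}^2\bigl(\|e\|^2+\|f\|^2\bigr),
\end{equation}
which simultaneously proves continuity and gives $\|\mathcal{A}\|\le\sqrt{\|e\|^2+\|f\|^2}$.

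For the reverse inequality, the natural candidate is the rank-one operator $T:=f\otimes e\in\mathcal{H}$. Using \eqref{e:210413a} and \eqref{e:210419a}, a direct computation gives $Te=\|e\|^2f$ and $T^*f=\|f\|^2e$, while $\|T\|_{\mathrm{HS}}=\|e\|\|f\|$ (readily obtained, for instance, by expanding $\|f\otimes e\|_{\mathrm{HS}}^2$ along any orthonormal basis of $X$, or by invoking \eqref{e:210419b}). Plugging in yields $\|\mathcal{A}T\|^2=\|e\|^2\|f\|^2\bigl(\|e\|^2+\|f\|^2\bigr)=\|T\|_{\mathrm{HS}}^2\bigl(\|e\|^2+\|f\|^2\bigr)$, matching the upper bound exactly.

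The only mildly delicate point is the degenerate case where $e=0$ or $f=0$, because then $f\otimes e=0$ and the above test operator is useless. If both vanish, $\mathcal{A}\equiv0$ and the formula is trivial. Otherwise, assume without loss of generality $f=0\neq e$ (the other case is symmetric); then $\|\mathcal{A}T\|=\|Te\|$, and one picks any unit vector $g\in Y$ (such $g$ exists since $\mathcal{A}\neq0$ forces $Y\neq\{0\}$) and uses the rank-one operator $T:=g\otimes(e/\|e\|)$, which has $\|T\|_{\mathrm{HS}}=1$ and $\|Te\|=\|e\|=\sqrt{\|e\|^2+\|f\|^2}$. I expect no real obstacle here --- the main thing to be careful about is keeping the identifications $(v\otimes u)e=\scal{u}{e}v$ and $(v\otimes u)^*f=\scal{v}{f}u$ straight when computing, so the correct rank-one operator is $f\otimes e$ rather than $e\otimes f$.
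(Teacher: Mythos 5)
Your proof is correct and takes essentially the same route as the paper's: linearity plus the bound $\|\mathcal{A}T\|^2\le\|T\|^2(\|e\|^2+\|f\|^2)$ via the fact that the Hilbert--Schmidt norm dominates the operator norm, and the matching lower bound via the test operator $T=f\otimes e$ with $\|T\|=\|e\|\,\|f\|$. Your explicit handling of the degenerate cases $e=0$ or $f=0$ (where $f\otimes e=0$ and the test operator gives nothing) is a small point the paper's own lower-bound argument passes over silently, so it is a harmless refinement rather than a deviation.
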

\begin{proof}
Clearly, $\mA$ is a linear operator.
Moreover, $(\forall T\in\mathcal{H})$ 
$\|\mathcal{A}(T)\|^2 = \|Te\|^2 + \|T^*f\|^2
\leq \|T\|_{\textsf{op}}^2\|e\|^2+
\|T^*\|_{\textsf{op}}^2\|f\|^2
\leq \|T\|^2(\|e\|^2+\|f\|^2)$ 
because the Hilbert-Schmidt norm dominates the operator norm.
It follows that $\mathcal{A}$ is continuous and 
$\|\mathcal{A}\|\leq \sqrt{\|e\|^2+\|f\|^2}$.
On the other hand,
if $T = f\otimes e$,
then 
$\|T\| = \|e\|\|f\|$, 
$\mA(T) = (\|e\|^2f,\|f\|^2e)$ and hence 
$\|\mA(T)\|=\|T\|\sqrt{\|e\|^2+\|f\|^2}$. 
Thus $\|\mA\| \geq \sqrt{\|e\|^2+\|f\|^2}$.
Combining these observations, we obtain 
altogether that 
$\|\mA\| = \sqrt{\|e\|^2+\|f\|^2}$. 
\end{proof}
}

We now prove that $\ran\mA$ is always closed. 

\begin{proposition}\textbf{(range of $\mA$ is closed)}
\label{p:therange}
The following hold:
\begin{enumerate}
\item 
\label{p:therange00}
If $e=0$ and $f=0$,  then 
$\ran\mA = \{0\}\times\{0\}$.
\item
\label{p:therange01}
If $e=0$ and $f\neq 0$, then 
$\ran\mA = \{0\}\times X$.
\item
\label{p:therange10}
If $e\neq 0$ and $f=0$, then 
$\ran\mA = Y\times \{0\}$. 
\item
\label{p:therange11}
If $e\neq 0$ and $f\neq 0$, then 
$\ran\mA = \{(f,-e)\}^\perp$. 
\end{enumerate}
Consequently, $\ran\mA$ is always 
a closed linear subspace of $Y\times X$. 
\end{proposition}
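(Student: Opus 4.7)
The plan is to dispatch the three degenerate cases (i)--(iii) by inspection, then treat the generic case (iv) via a direct double inclusion, producing an explicit preimage built from rank-one operators. Closedness of $\ran\mA$ then follows at once, either because the range is a full subspace ($\{0\}\times\{0\}$, $\{0\}\times X$, or $Y\times\{0\}$) or because it is an orthogonal complement.

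For case (i) the formula $\mA(T)=(0,0)$ is immediate. For case (ii), given any $x\in X$, the rank-one operator $T=(\|f\|^{-2}f)\otimes x$ is Hilbert-Schmidt and, using \cref{e:210413a} together with $e=0$, satisfies $\mA(T)=(Te,T^*f)=(0,\scal{\|f\|^{-2}f}{f}x)=(0,x)$. Case (iii) is symmetric: take $T= y\otimes(\|e\|^{-2}e)$. These handle the three boundary cases and show $\ran\mA$ equals the stated closed subspace in each.

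The heart of the argument is case (iv). The inclusion $\ran\mA\subseteq\{(f,-e)\}^\perp$ is a one-line check: for every $T\in\mathcal H$,
\begin{equation}
\bscal{(Te,T^*f)}{(f,-e)}=\scal{Te}{f}-\scal{T^*f}{e}=\scal{Te}{f}-\scal{Te}{f}=0.
\end{equation}
For the reverse inclusion, fix $(y,x)\in\{(f,-e)\}^\perp$, so that $\scal{y}{f}=\scal{x}{e}$. The construction I would propose is the rank-(at most) three Hilbert-Schmidt operator
\begin{equation}
T:=\frac{1}{\|e\|^2}\,(y\otimes e)+\frac{1}{\|f\|^2}\,(f\otimes x)-\frac{\scal{y}{f}}{\|e\|^2\,\|f\|^2}\,(f\otimes e).
\end{equation}
Using the definition of rank-one operators and \cref{e:210419a}, a direct computation of $Te$ and $T^*f$ gives, respectively, $y+(\|f\|^{-2}\scal{x}{e}-\|e\|^{-2}\|f\|^{-2}\scal{y}{f}\|e\|^2)\,f$ and $x+(\|e\|^{-2}\scal{y}{f}-\|e\|^{-2}\|f\|^{-2}\scal{x}{e}\|f\|^2)\,e$; the parenthesized scalars both vanish precisely because $\scal{y}{f}=\scal{x}{e}$, so $\mA(T)=(y,x)$ as required.

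The only delicate point is choosing the right ansatz for $T$: the two obvious rank-one candidates $y\otimes e$ and $f\otimes x$ each produce the correct ``main'' term but leave spurious contributions along $f$ and $e$ respectively; adding the correction $f\otimes e$ provides exactly one scalar degree of freedom, which must be used to kill both spurious terms simultaneously, and it is the orthogonality hypothesis that forces the two resulting equations to be consistent. Once the inclusions are established, $\ran\mA=\{(f,-e)\}^\perp$ is a closed subspace of $Y\times X$, and combining with the three degenerate cases yields the final assertion that $\ran\mA$ is always closed.
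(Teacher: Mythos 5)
Your proof is correct, and for the key case \cref{p:therange11} it takes a genuinely different route from the paper. The paper proves the reverse inclusion $\{(f,-e)\}^\perp\subseteq\ran\mA$ by splitting into two cases according to whether the common value $\scal{e}{x}=\scal{f}{y}$ is nonzero or zero: in the first case it exhibits the \emph{rank-one} preimage $T=\tfrac{1}{\scal{x}{e}}\,(y\otimes x)$, and in the second it uses $T=\tfrac{1}{\|f\|^2}(f\otimes x)+\tfrac{1}{\|e\|^2}(y\otimes e)$. You instead give a single case-free ansatz, $T=\tfrac{1}{\|e\|^2}(y\otimes e)+\tfrac{1}{\|f\|^2}(f\otimes x)-\tfrac{\scal{y}{f}}{\|e\|^2\|f\|^2}(f\otimes e)$, and verify $\mA(T)=(y,x)$ using only $\scal{y}{f}=\scal{x}{e}$. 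This checks out (with your coefficient the spurious term in $T^*f$ vanishes identically and the one in $Te$ vanishes by the orthogonality hypothesis; your written scalar for the $T^*f$ term tacitly swaps $\scal{y}{f}$ for $\scal{x}{e}$, harmless since they are equal), and it has the aesthetic advantage of avoiding the case distinction; in fact your $T$ coincides, on $\ran\mA$, with the value of the Moore--Penrose inverse computed later in \cref{t:Adagger} and with the structure of \cref{e:PranA^*}, so your construction quietly anticipates the pseudo-inverse formula. What the paper's Case~1 buys in exchange is the extra information that a rank-one preimage exists whenever $\scal{e}{x}=\scal{f}{y}\neq 0$. The degenerate cases (i)--(iii) and the forward inclusion in (iv) are handled exactly as in the paper, and the concluding closedness assertion follows as you say.
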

\begin{proof}
\cref{p:therange00}: Clear. 

\cref{p:therange01}: 
Obviously, $\ran\mA\subseteq \{0\}\times X$.
Conversely, let $x\in X$ and set
\begin{equation}
T := \frac{1}{\|f\|^2}f\otimes x.
\end{equation}
Then $Te=T0 = 0$ and 
\begin{equation}
T^*f = \frac{1}{\|f\|^2}(f\otimes x)^*f
= \frac{1}{\|f\|^2}\scal{f}{f}x = x
\end{equation}
and thus 
$(0,x)=(Te,T^*f)=\mA(T)\in\ran\mA$.

\cref{p:therange10}: 
Obviously, $\ran\mA\subseteq Y\times \{0\}$.
Conversely, let $y\in Y$ and set
\begin{equation}
T := \frac{1}{\|e\|^2}y\otimes e.
\end{equation}
Then $T^*f=T^*0 = 0$ and 
\begin{equation}
Te = \frac{1}{\|e\|^2}(y\otimes e)e
= \frac{1}{\|e\|^2}\scal{e}{e}y = y
\end{equation}
and thus 
$(y,0)=(Te,T^*f)=\mA(T)\in\ran\mA$.

\cref{p:therange11}: 
If $(y,x)\in\ran\mA$, say
$(y,x)=\mathcal{A}(T)=(Te,T^*f)$ for some $T\in\mH$, 
then
\begin{align}
\scal{f}{y}
&=\scal{f}{Te}
= \scal{T^*f}{e}
=\scal{x}{e},
\end{align}
i.e., $(y,x)\perp (f,-e)$.
It follows that $\ran\mA\subseteq \{(f,-e)\}^\perp$.

Conversely, let $(y,x)\in\{(f,-e)\}^\perp$, i.e., 
$\scal{e}{x}=\scal{f}{y}$.

{\it Case~1:} $\scal{e}{x}=\scal{f}{y}\neq 0$.\\
Set
\begin{equation}
\zeta := \frac{1}{\scal{x}{e}}=\frac{1}{\scal{y}{f}} 
\quad\text{and}\quad
T := \zeta (y\otimes x)\in\mH.
\end{equation}
Note that 
\begin{equation}
Te = \zeta(y\otimes x)e
= \zeta \scal{x}{e}y = y
\end{equation}
and 
\begin{equation}
T^*f = \zeta (y\otimes  x)^*f
= \zeta\scal{y}{f}x = x;
\end{equation}
therefore, $(y,x)=(Te,T^*f)=\mA(T)\in\ran \mA$.

{\it Case~2:} $\scal{e}{x}=\scal{f}{y}=0$.\\
Pick $\xi$ and $\eta$ in $\RR$ such that 
\begin{equation}
\xi \|f\|^2=1
\quad\text{and}\quad 
\eta\|e\|^2=1,
\end{equation}
and set 
\begin{equation}
T := \xi(f\otimes x) + \eta(y\otimes e)\in\mH.
\end{equation}
Then
\begin{equation}
Te = \xi(f\otimes x)e + \eta(y\otimes e)e
= \xi\scal{x}{e}f + \eta\scal{e}{e}y = 0f + \eta\|e\|^2y = y
\end{equation}
and 
\begin{equation}
T^*f = \xi(f\otimes x)^*f + \eta(y\otimes e)^*f
= \xi\scal{f}{f}x + \eta\scal{y}{f}e = \xi\|f\|^2x + 0e = x.
\end{equation}
Thus 
$(y,x)=(Te,T^*f)=\mA(T)\in\ran\mA$. 
\end{proof}



We now turn to the adjoint of $\mA$: 

\begin{proposition}\textbf{(adjoint of $\mA$)}
\label{p:A^*}
We have 
\begin{equation}
\label{e:A^*}
\mathcal{A}^*\colon {Y}\times {X} \to \mathcal{H}\colon
(y,x)\mapsto y\otimes e + f \otimes x.
\end{equation}
\end{proposition}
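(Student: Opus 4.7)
The plan is to verify the defining identity of the adjoint directly, namely that for all $T\in\mH$ and all $(y,x)\in Y\times X$,
\begin{equation}
\scal{\mA T}{(y,x)}_{Y\times X} = \scal{T}{y\otimes e + f\otimes x}_{\mH}.
\end{equation}
First I would note that $y\otimes e + f\otimes x$ is a sum of two rank-one operators, hence it lies in $\mH$, so the formula at least produces a well-defined element of $\mH$. Then I would unfold the left-hand side using the product inner product on $Y\times X$:
\begin{equation}
\scal{\mA T}{(y,x)}_{Y\times X}
= \scal{Te}{y}_Y + \scal{T^*f}{x}_X.
\end{equation}

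The key ingredient is the standard pairing formula for the Hilbert--Schmidt inner product against rank-one operators: for any $T\in\mH$, $u\in X$, and $v\in Y$,
\begin{equation}
\scal{T}{v\otimes u}_{\mH} = \scal{Tu}{v}_Y,
\end{equation}
which one obtains immediately by expanding $\scal{T}{v\otimes u}_{\mH} = \sum_{i}\scal{Te_i}{(v\otimes u)e_i}_Y$ in an orthonormal basis $(e_i)$ of $X$ and using the definition of $v\otimes u$. Applying this with $(v,u)=(y,e)$ gives $\scal{T}{y\otimes e}_{\mH} = \scal{Te}{y}_Y$, while applying it with $(v,u)=(f,x)$ and then using $\scal{Tx}{f}_Y = \scal{x}{T^*f}_X = \scal{T^*f}{x}_X$ gives $\scal{T}{f\otimes x}_{\mH} = \scal{T^*f}{x}_X$.

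Adding these two identities and using bilinearity of the Hilbert--Schmidt inner product, one obtains
\begin{equation}
\scal{T}{y\otimes e + f\otimes x}_{\mH}
= \scal{Te}{y}_Y + \scal{T^*f}{x}_X
= \scal{\mA T}{(y,x)}_{Y\times X},
\end{equation}
which by uniqueness of the adjoint of the continuous linear operator $\mA$ yields \cref{e:A^*}. There is really no serious obstacle; the only thing to be careful about is to justify the pairing formula $\scal{T}{v\otimes u}_{\mH} = \scal{Tu}{v}_Y$, which I would either cite from \cite[Section~2.6]{KadiRing1} or verify in one line from the definition of the Hilbert--Schmidt inner product.
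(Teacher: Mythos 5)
Your proposal is correct and follows essentially the same route as the paper: the paper also verifies the adjoint identity $\scal{\mA T}{(y,x)}=\scal{T}{y\otimes e+f\otimes x}$ by expanding the Hilbert--Schmidt inner product in an orthonormal basis of $X$ and recognizing the rank-one pairings, the only difference being that you isolate the identity $\scal{T}{v\otimes u}=\scal{Tu}{v}$ as a lemma while the paper carries out the basis expansion inline for both terms at once.
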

\begin{proof}
Let $T\in\mathcal{H}$ and $(y,x)\in Y\times X$.
Let $B$ be any orthonormal basis of $X$.
Then 
\begin{subequations}
\begin{align}
\scal{\mathcal{A}(T)}{(y,x)}
&= \scal{(Te,T^*f)}{(y,x)}\\
&= \scal{Te}{y}+\scal{T^*f}{x}\\
&= \scal{e}{T^*y}+\scal{T^*f}{x}\\
&= \sum_{b\in B}\big(\scal{e}{b}\scal{b}{T^*y}+\scal{T^*f}{b}\scal{b}{x}\big)\\
&= \sum_{b\in B}\scal{Tb}{\scal{e}{b}y}
+\sum_{b\in B}\scal{Tb}{\scal{x}{b}f}\\
&= \sum_{b\in B}\scal{Tb}{(y\otimes e)b}
+\sum_{b\in B}\scal{Tb}{(f\otimes x)b}\\
&= \scal{T}{y\otimes e} + \scal{T}{f\otimes x} \\ 
&= \scal{T}{y\otimes e + f \otimes x}
\end{align}
\end{subequations}
which proves the result. 
\end{proof}

We have all the results together to start tackling 
the Moore-Penrose inverse of $\mA$. 

\begin{theorem}\textbf{(Moore-Penrose inverse of $\mA$ part 1)}
\label{t:Adagger}
Suppose that $e\neq 0$ and $f\neq 0$. 
Let $(y,x)\in Y\times X$. 
Then 
\begin{equation}
\label{e:Adagger}
\mathcal{A}^\dagger(y,x)
= 
\frac{1}{\|e\|^2}\Big( y \otimes e - 
\frac{\scal{f}{{y}}}{\|e\|^2+\|f\|^2} f\otimes e\Big) + 
\frac{1}{\|f\|^2}\Big( f \otimes x - 
\frac{\scal{e}{x}}{\|e\|^2+\|f\|^2} f\otimes e\Big). 
\end{equation}
\end{theorem}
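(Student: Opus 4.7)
The plan is to verify the proposed formula via the standard characterization of the Moore--Penrose inverse of an operator with closed range: since $\ran\mathcal{A}$ is closed by \cref{p:therange}\cref{p:therange11}, the element $\mathcal{A}^\dagger(y,x)$ is the unique $T\in\mathcal{H}$ satisfying (i) $T\in \ran\mathcal{A}^*$ and (ii) $\mathcal{A} T = P_{\ran\mathcal{A}}(y,x)$. I would take the right-hand side of \cref{e:Adagger} as the candidate $T$ and check these two conditions in turn.

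For (i), \cref{p:A^*} shows that $\ran\mathcal{A}^*$ consists precisely of the operators of the form $y'\otimes e + f\otimes x'$ with $(y',x')\in Y\times X$. Grouping the four summands on the right-hand side of \cref{e:Adagger} into the parts ending in $\otimes e$ and the parts beginning with $f\otimes$, one sees it can be rewritten as
\[
\tilde y\otimes e + f\otimes \tilde x,\qquad \tilde y := \frac{y}{\|e\|^2}-\beta f,\quad \tilde x := \frac{x}{\|f\|^2}-\gamma e,
\]
for any scalars $\beta,\gamma$ splitting the total coefficient of $f\otimes e$; hence the candidate lies in $\ran\mathcal{A}^*$.

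For (ii), combining \cref{p:therange}\cref{p:therange11} with a one-line orthogonal projection calculation onto the line $\mathbb{R}(f,-e)$ yields
\[
P_{\ran\mathcal{A}}(y,x) = (y,x) - \frac{\langle f,y\rangle-\langle e,x\rangle}{\|e\|^2+\|f\|^2}(f,-e).
\]
I would then apply $\mathcal{A}$ to the right-hand side of \cref{e:Adagger} term by term, using the rank-one identities $(v\otimes u)w = \langle u,w\rangle v$ and \cref{e:210413a}. The ``diagonal'' pieces $\tfrac{1}{\|e\|^2}(y\otimes e)e$ and $\tfrac{1}{\|f\|^2}(f\otimes x)^*f$ recover $y$ and $x$ respectively, while the remaining ``cross'' pieces together with the two $f\otimes e$ correction terms, combined over the common denominator $\|e\|^2\|f\|^2(\|e\|^2+\|f\|^2)$, produce precisely the coefficient $(\langle e,x\rangle-\langle f,y\rangle)/(\|e\|^2+\|f\|^2)$ of $f$ in the first coordinate and the corresponding multiple of $e$ in the second.

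The main obstacle is purely bookkeeping: pushing the four rank-one pieces through two evaluations of $\mathcal{A}$, clearing fractions, and observing the cancellations that the specific normalization in \cref{e:Adagger} has been engineered to produce. No new conceptual input beyond \cref{p:therange} and \cref{p:A^*} is required.
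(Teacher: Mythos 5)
Your proposal is correct, and the computations it sketches do go through (I checked the evaluation of $\mA$ on the candidate: the coefficients combine to $\scal{e}{x}/(\|e\|^2+\|f\|^2)$ and $\scal{f}{y}/(\|e\|^2+\|f\|^2)$ exactly as you claim), but it runs along a genuinely different verification route than the paper. The paper never touches $P_{\ran\mA}$ at this stage: it constructs a specific pair $(v,u)$, shows $\mA^*\mA\mA^*(v,u)=\mA^*(y,x)$, and invokes the normal-equation characterization of the Moore--Penrose inverse (\cite[Proposition~3.30(i)]{BC2017}) together with $\mA^*(v,u)\in\ran\mA^*=(\ker\mA)^\perp$; the only structural input is the adjoint formula \cref{p:A^*}, while \cref{p:therange} is used solely to guarantee closed range. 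You instead use the ``Moore'' characterization --- $T\in(\ker\mA)^\perp$ and $\mA T=P_{\ran\mA}(y,x)$ --- which forces you to know $P_{\ran\mA}$ explicitly; this is where \cref{p:therange}\cref{p:therange11} earns its keep, since $\ran\mA=\{(f,-e)\}^\perp$ makes the projection a one-line computation, and membership of the candidate in $\ran\mA^*\subseteq(\ker\mA)^\perp$ is immediate from \cref{p:A^*}. Your route only requires applying $\mA$ once to the candidate (rather than pushing $(v,u)$ through $\mA^*\mA\mA^*$), and as a by-product it already establishes the formula \cref{e:PranA} that the paper only derives afterwards in \cref{c:ranges}; the paper's route, by contrast, needs no explicit description of the range and produces the candidate constructively rather than pulling it out of the air. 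Two small points you should make explicit if you write this up: the uniqueness in your characterization rests on $\mA$ being injective on $(\ker\mA)^\perp$, and the fact that $\mA^\dagger(y,x)$ itself satisfies your conditions (i)--(ii) is exactly $\ran\mA^\dagger=\ran\mA^*$ and $\mA\mA^\dagger=P_{\ran\mA}$, i.e.\ \cite[Proposition~3.30]{BC2017}, which deserves a citation just as the paper gives one.
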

\begin{proof}
Set 
\begin{equation}
\label{e:goodfriday1}
(v,u) := 
\bigg(\frac{1}{\|e\|^2}\Big( y - 
\frac{\scal{y}{f}}{\|e\|^2 + \|f\|^2} f\Big), 
\frac{1}{\|f\|^2}\Big( x - 
\frac{\scal{x}{e}}{\|e\|^2 + \|f\|^2} e\Big)
\bigg). 
\end{equation}
Then 
\begin{subequations}
\label{e:goodfriday2}
\begin{align}
\scal{f}{v}
&= \frac{1}{\|e\|^2}
\Big(\scal{f}{y} - \frac{\scal{y}{f}}{\|e\|^2 + \|f\|^2}
\scal{f}{f}\Big)\\
&=\frac{\scal{f}{y}}{\|e\|^2}
\Big(
1- \frac{\|f\|^2}{\|e\|^2 + \|f\|^2}
\Big)\\
&=\frac{\scal{f}{y}}{\|e\|^2}
\cdot 
\frac{\|e\|^2 + \|f\|^2 - \|f\|^2}{\|e\|^2 + \|f\|^2}\\
&= \frac{\scal{f}{y}}{\|e\|^2 + \|f\|^2}, 
\end{align}
\end{subequations}
and similarly 
\begin{equation} 
\label{e:goodfriday3}
\scal{e}{u} = 
\frac{\scal{e}{x}}{\|e\|^2 + \|f\|^2}.
\end{equation}
Substituting \cref{e:goodfriday2} and \cref{e:goodfriday3}
in \cref{e:goodfriday1} yields 
\begin{equation}
\label{e:goodfriday4}
(v,u) = 
\Big(\frac{1}{\|e\|^2}\big( y - 
\scal{f}{{v}} f\big), 
\frac{1}{\|f\|^2}\big( x - 
\scal{e}{u} e\big)
\Big). 
\end{equation}
Thus 
\begin{equation}
\label{e:goodfriday5}
y = \|e\|^2v + \scal{f}{v}f
\quad\text{and}\quad
x = \|f\|^2u + \scal{e}{u}e.
\end{equation}
Therefore, using \cref{e:A^*}, \cref{e:goodfriday4}, 
\cref{e:210413a}, and \cref{e:A^*} again, 
we obtain
\begin{subequations}
\begin{align}
\mathcal{A}^*\mathcal{A}\mathcal{A}^*(v,u)
&= \mathcal{A}^*\mathcal{A}(v\otimes e+f\otimes u)\\
&= \mathcal{A}^*\big((v\otimes e)e + (f\otimes u) e,
(v\otimes e)^* f+(f\otimes u)^* f\big)\\
&= \mathcal{A}^*\big(\|e\|^2v + \scal{e}{u}f,
\scal{f}{v}e+\|f\|^2 u\big)\\
&= \big(\|e\|^2v + \scal{e}{u}f\big)\otimes e + 
f\otimes\big(\scal{f}{v}e+\|f\|^2 u\big)\\
&= \|e\|^2v\otimes e + \scal{e}{u}f\otimes e
+ \scal{f}{v}f\otimes e+\|f\|^2f\otimes u \\
&= 
\big(\|e\|^2v+\scal{f}{v}f \big)\otimes e
+
f\otimes \big(\|f\|^2u +\scal{e}{u}e\big)\\
&=y \otimes e + f \otimes x\\
&=\mathcal{A}^*(y,x).
\end{align}
\end{subequations}
To sum up, 
we found $\mathcal{A}^*(v,u)\in\ran \mathcal{A}^* 
= (\ker \mathcal{A})^\perp$ 
such that 
$\mathcal{A}^*\mathcal{A}\mathcal{A}^*(v,u) 
= \mathcal{A}^*(y,x)$.
By \cite[Proposition~3.30(i)]{BC2017}, 
\cref{e:goodfriday4}, and \cref{e:A^*}, 
we deduce that 
\begin{subequations}
\begin{align}
\mathcal{A}^\dagger(y,x)
&=\mathcal{A}^*(v,u)\\
&=\mathcal{A}^*\Big(\frac{1}{\|e\|^2}\big( y - 
\scal{f}{{v}} f\big), 
\frac{1}{\|f\|^2}\big( x - 
\scal{e}{u} e\big)
\Big)\\
&=\frac{1}{\|e\|^2}\big( y\otimes e - 
\scal{f}{{v}} f\otimes e\big) + 
\frac{1}{\|f\|^2}\big( f\otimes x - 
\scal{e}{u} f\otimes e\big)
\end{align}
\end{subequations}
which now results in \cref{e:Adagger}
by using \cref{e:goodfriday2} and \cref{e:goodfriday3}. 
\end{proof}

\begin{theorem}{\textbf{(Moore-Penrose inverse of $\mA$ part 2)}}
\label{t:rest}
Let $(y,x)\in Y\times X$. 
Then the following hold:
\begin{enumerate}
\item
\label{t:rest1}
If $e=0$ and $f\neq 0$, then 
$\displaystyle \mathcal{A}^\dagger(y,x)
= \frac{1}{\|f\|^2} f\otimes x$.\\[+2mm] 
\item
\label{t:rest2}
If $e\neq 0$ and $f= 0$, then 
$\displaystyle\mathcal{A}^\dagger(y,x)
= \frac{1}{\|e\|^2}
y\otimes e$.\\[+2mm]
\item 
\label{t:rest3}
If $e=0$ and $f=0$, then
$\mathcal{A}^\dagger(y,x) = 0\in\mathcal{H}$. 
\end{enumerate}
\end{theorem}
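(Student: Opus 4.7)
The plan is to mirror the strategy used in the proof of \cref{t:Adagger}: invoke the characterization \cite[Proposition~3.30(i)]{BC2017} that $\mA^\dagger(y,x)$ is the unique element $T\in\ran\mA^*=(\ker\mA)^\perp$ satisfying $\mA^*\mA T=\mA^*(y,x)$. Since in each case we have a candidate $T$ expressed as an evaluation of $\mA^*$ from \cref{e:A^*}, membership in $\ran\mA^*$ will be immediate; only the identity $\mA^*\mA T=\mA^*(y,x)$ needs to be checked, and the rank-one algebra from \cref{e:210413a} makes this routine.

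For \cref{t:rest1}, set $T:=\frac{1}{\|f\|^2}f\otimes x$. Since $e=0$, \cref{e:A^*} gives $\mA^*(0,x/\|f\|^2)=f\otimes(x/\|f\|^2)=T$, so $T\in\ran\mA^*$. Using $e=0$, I compute $Te=0$ and $T^*f=\frac{1}{\|f\|^2}(x\otimes f)f=x$, so $\mA T=(0,x)$. Applying $\mA^*$ and again using $e=0$ yields $\mA^*\mA T=f\otimes x=\mA^*(y,x)$, which closes the case.

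For \cref{t:rest2}, the argument is completely symmetric: take $T:=\frac{1}{\|e\|^2}y\otimes e=\mA^*(y/\|e\|^2,0)\in\ran\mA^*$, observe that $T^*f=0$ and $Te=y$ using $f=0$, and conclude $\mA^*\mA T=y\otimes e=\mA^*(y,x)$.

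For \cref{t:rest3}, since $e=f=0$ the operator $\mA$ is identically $0$, so $T:=0\in\ran\mA^*$ trivially satisfies $\mA^*\mA T=0=\mA^*(y,x)$, giving $\mA^\dagger(y,x)=0$. I do not anticipate any real obstacle here, since the degenerate cases collapse the formula of \cref{e:Adagger} in the expected way; the only mild care is to cite the correct characterization of the Moore-Penrose inverse and to keep track of which of $e,f$ vanishes when simplifying $\mA^*$ and $\mA$.
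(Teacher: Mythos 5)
Your argument is correct, but it takes a different route from the paper's own proof of this theorem. The paper verifies the four Penrose conditions directly for \cref{t:rest1} (self-adjointness of $\mA\mA^\dagger$ and $\mA^\dagger\mA$, plus $\mA\mA^\dagger\mA=\mA$ and $\mA^\dagger\mA\mA^\dagger=\mA^\dagger$), declares \cref{t:rest2} analogous, and handles \cref{t:rest3} via the Desoer--Whalen conditions; you instead reuse the characterization from \cite[Proposition~3.30(i)]{BC2017} --- exhibit an element of $\ran\mA^*=(\ker\mA)^\perp$ solving the normal equation $\mA^*\mA T=\mA^*(y,x)$ --- which is exactly the mechanism the paper employs for \cref{t:Adagger}. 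Your computations check out: with $e=0$ the candidate $T=\|f\|^{-2}f\otimes x$ equals $\mA^*(0,x/\|f\|^2)$, satisfies $\mA T=(0,x)$, and $\mA^*(0,x)=f\otimes x=\mA^*(y,x)$ since $y\otimes e=0$; the case $f=0$ is symmetric, and in the doubly degenerate case $(\ker\mA)^\perp=\{0\}$ forces $\mA^\dagger(y,x)=0$. Two remarks on the trade-off: your route is shorter and uniform with part~1, but it silently leans on the closed-range hypothesis behind \cite[Proposition~3.30(i)]{BC2017}, which here is supplied by \cref{p:therange} (trivially so in these degenerate cases), so a one-line pointer to that fact would make the appeal airtight; the paper's Penrose-condition verification, while longer, is definition-level self-contained and incidentally produces the explicit expressions for $\mA\mA^\dagger$ and $\mA^\dagger\mA$ that reappear in \cref{c:ranges}.
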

\begin{proof}
Let $T\in\mH$. 

\cref{t:rest1}:
In this case, 
$\mathcal{A}(T) = (0,T^*f)$ 
and $\mathcal{A}^*(y,x) = f\otimes x$. 
Let us verify the Penrose conditions \cite[page~48]{Groetsch}.
First, using \cref{e:210413a}, 
\begin{subequations}
\label{e:210418b}
\begin{align}
\mathcal{A}\mathcal{A}^\dagger(y,x)
&= \mathcal{A}\big(\|f\|^{-2}f\otimes x\big)
= \|f\|^{-2}\big((f\otimes x)e,(f\otimes x)^*f\big)\\
&= \|f\|^{-2}\big(0,\scal{f}{f}x\big)
= (0,x)
\end{align}
\end{subequations}
and 
\begin{align}
\bscal{\mathcal{A}\mathcal{A}^\dagger(y,x)}{(v,u)}
= \bscal{(0,x)}{(v,u)} = \scal{x}{u}
= \bscal{\mathcal{A}\mathcal{A}^\dagger(v,u)}{(y,x)}
\end{align}
which shows that $\mathcal{A}\mathcal{A}^\dagger$ is 
indeed { self-adjoint}.

Secondly, 
\begin{equation}
\label{e:210418a}
\mathcal{A}^\dagger\mathcal{A}(T)
= \mathcal{A}^\dagger(Te,T^*f)
= \mathcal{A}^\dagger(0,T^*f)
= \|f\|^{-2}f\otimes(T^*f), 
\end{equation}
and if 
$S\in\mH$ and 
$B$ is any orthonormal basis of $X$, 
then 
\begin{subequations}
\begin{align}
\bscal{\mathcal{A}^\dagger\mathcal{A}(T)}{S}
&= \|f\|^{-2}\bscal{f\otimes (T^*f)}{S}\\
&= \|f\|^{-2}\sum_{b\in B}\bscal{(f\otimes(T^*f))b}{Sb}\\
&= \|f\|^{-2}\sum_{b\in B}\bscal{\scal{T^*f}{b}f}{Sb}\\
&= \|f\|^{-2}\sum_{b\in B}\bscal{\scal{f}{Tb}f}{Sb}\\
&= \|f\|^{-2}\sum_{b\in B}\scal{f}{Tb}\scal{f}{Sb}\\
&= \bscal{\mathcal{A}^\dagger\mathcal{A}(S)}{T}
\end{align}
\end{subequations}
which yields the symmetry of $\mathcal{A}^\dagger\mathcal{A}$. 

Thirdly, using \cref{e:210418a} 
and the assumption that $e=0$, we have 
\begin{subequations}
\begin{align}
\mathcal{A}\mathcal{A}^\dagger\mathcal{A}(T)
&=
\mathcal{A}\big( \|f\|^{-2}f\otimes(T^*f)\big)
=\|f\|^{-2}\big(0,(f\otimes(T^*f))^*f \big)\\
&=\|f\|^{-2}\big(0,\scal{f}{f}T^*f\big)
= (0,T^*f)\\
&=\mathcal{A}(T).
\end{align}
\end{subequations}

And finally, using \cref{e:210418b}, we have 
\begin{equation}
\mathcal{A}^\dagger \mathcal{A}\mathcal{A}^\dagger(y,x)
= \mathcal{A}^\dagger(0,x) = \|f\|^{-2}f\otimes x
= \mathcal{A}^\dagger(y,x).
\end{equation}

\cref{t:rest2}: This can be proved similar to \cref{t:rest1}. 

\cref{t:rest3}: 
In this case, $\mathcal{A}$ is the zero 
operator and hence the Desoer-Whalen conditions (see \cite[page~51]{Groetsch}) 
make it obvious that $\mathcal{A}^\dagger$ is the zero operator as well.
\end{proof}

Let us define the auxiliary function
\begin{equation}
\label{e:auxf}
\delta(\xi) := 
\begin{cases}
\xi, &\text{if $\xi\neq 0$;}\\
1, &\text{if $\xi=0$}
\end{cases}
\end{equation}
which allows us to combine the previous two results into one: 

\begin{corollary}
Let $(y,x)\in Y\times X$. Then 
\begin{align}
\mathcal{A}^\dagger(y,x)
&= 
\frac{1}{\delta(\|e\|^2)}\Big( y \otimes e - 
\frac{\scal{f}{{y}}}{\delta(\|e\|^2+\|f\|^2)} f\otimes e\Big)\\
&\qquad + 
\frac{1}{\delta(\|f\|^2)}\Big( f \otimes x - 
\frac{\scal{e}{x}}{\delta(\|e\|^2+\|f\|^2)} f\otimes e\Big). 
\end{align}
\end{corollary}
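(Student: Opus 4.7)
The plan is to reduce the claim to the cases already covered by Theorems \ref{t:Adagger} and \ref{t:rest} by a straightforward case split on whether $e$ and $f$ vanish. The key observation is that the purpose of $\delta$ is purely cosmetic: it protects against division by zero in the very branches where the corresponding rank-one tensor happens to vanish, so no genuine new computation is needed.

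First I would treat the case $e\neq 0$ and $f\neq 0$. Here $\delta(\|e\|^2)=\|e\|^2$, $\delta(\|f\|^2)=\|f\|^2$, and $\delta(\|e\|^2+\|f\|^2)=\|e\|^2+\|f\|^2$, so the asserted identity is literally formula \cref{e:Adagger} from Theorem~\ref{t:Adagger}.

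Next I would handle the mixed cases. If $e=0$ and $f\neq 0$, then $y\otimes e$ and $f\otimes e$ are the zero operator, so the first bracket contributes $0$; in the second bracket, $\scal{e}{x}=0$ kills the second summand, and $\delta(\|f\|^2)=\|f\|^2$, leaving $\|f\|^{-2}f\otimes x$, in agreement with Theorem~\ref{t:rest}\cref{t:rest1}. The case $e\neq 0$ and $f=0$ is symmetric: now $f\otimes x$ and $f\otimes e$ vanish, $\scal{f}{y}=0$ kills the extra summand in the first bracket, and what remains is $\|e\|^{-2}y\otimes e$, matching Theorem~\ref{t:rest}\cref{t:rest2}.

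Finally, in the case $e=0=f$, every rank-one term $y\otimes e$, $f\otimes x$, and $f\otimes e$ is the zero operator, so the whole expression collapses to $0\in\mH$, recovering Theorem~\ref{t:rest}\cref{t:rest3}. There is no substantive obstacle here; the only thing to be a bit careful about is to confirm that whenever a $\delta$ has been inserted in a denominator (i.e.\ whenever the underlying norm is zero), the numerator of that fraction is also zero, so that the convention $\delta(0)=1$ does not introduce any spurious contribution. That check is immediate from $0\otimes(\cdot)=0$ and $(\cdot)\otimes 0=0$, which completes the argument.
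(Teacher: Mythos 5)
Your proposal is correct and matches the paper's intent: the paper states this corollary without a written proof precisely because it is the routine case-by-case consolidation of \cref{t:Adagger} and \cref{t:rest} that you carry out, with the observation that whenever $\delta$ replaces a zero denominator the accompanying rank-one terms vanish, so the convention $\delta(0)=1$ is harmless. Nothing is missing.
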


We now turn to formulas for $P_{\ran \mathcal{A}}$
and $P_{\ran\mathcal{A^*}}$. 

\begin{corollary}\textbf{(projections onto $\ran\mA$ and $\ran\mA^*$)}
\label{c:ranges}
Let $(y,x)\in{Y}\times{X}$
and let $T\in\mH$.
If $e\neq 0$ and $f\neq 0$, 
then
\begin{equation}
\label{e:PranA}
P_{\ran \mA}(y,x)
= \mA\mA^\dagger(y,x) = 
\Big( 
y-\frac{\scal{f}{y}-\scal{e}{x}}{\|e\|^2 + \|f\|^2}f,
x-\frac{\scal{e}{x}-\scal{f}{y}}{\|e\|^2 + \|f\|^2}e
\Big)
\end{equation}
and 
\begin{equation}
\label{e:PranA^*}
P_{\ran \mA^*}(T) = \mA^\dagger \mA(T) 
= 
\frac{1}{\|e\|^2} (Te) \otimes e
+ \frac{1}{\|f\|^2}f\otimes(T^*f)
- \frac{\scal{f}{{Te}}}{\|e\|^2\|f\|^2} f\otimes e. 
\end{equation}
Furthermore, 
\begin{equation}
P_{\ran \mA}(y,x)
= \mA\mA^\dagger(y,x) =
\begin{cases}
(0,x), &\text{if $e=0$ and $f\neq 0$;}\\[+1mm]
(y,0), &\text{if $e\neq 0$ and $f= 0$;}\\[+1mm]
(0,0), &\text{if $e=0$ and $f= 0$;}
\end{cases}
\end{equation}
and 
\begin{equation}
P_{\ran \mA^*}(T)
= \mA^\dagger\mA(T) =
\begin{cases}
\displaystyle \frac{1}{\|f\|^2}f\otimes(T^*f), &\text{if $e=0$ and $f\neq 0$;}\\[+4mm]
\displaystyle \frac{1}{\|e\|^2}(Te)\otimes e, &\text{if $e\neq 0$ and $f= 0$;}\\[+4mm]
0, &\text{if $e=0$ and $f= 0$.}
\end{cases}
\end{equation}
\end{corollary}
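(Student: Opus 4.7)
The plan is to exploit the basic identities $\mathcal{A}\mathcal{A}^\dagger=P_{\ran \mathcal{A}}$ and $\mathcal{A}^\dagger\mathcal{A}=P_{\overline{\ran \mathcal{A}^*}}=P_{\ran\mathcal{A}^*}$ (the closure being superfluous here since, by \cref{p:therange}, $\ran\mathcal{A}$ is closed, hence so is $\ran\mathcal{A}^*$). These are the same facts about the Moore-Penrose inverse that were invoked in \cref{p:210416a}. Given that \cref{t:Adagger} and \cref{t:rest} already deliver closed-form expressions for $\mathcal{A}^\dagger$ in each of the four cases $(e,f)\in\{0,\neq 0\}^2$, the corollary reduces to substitution and straightforward algebra.

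First I would handle the generic case $e\neq 0$, $f\neq 0$. Using \cref{e:defA}, applying $\mathcal{A}$ to the right-hand side of \cref{e:Adagger} amounts to evaluating $(y\otimes e)e=\|e\|^2 y$, $(f\otimes e)e=\|e\|^2 f$, $(f\otimes x)e=\scal{x}{e}f$, and the symmetric identities for the adjoint components via \cref{e:210419a}. In the first slot, the terms proportional to $f$ combine as
\begin{equation}
-\frac{\scal{f}{y}}{\|e\|^2+\|f\|^2}f + \frac{\scal{e}{x}}{\|f\|^2}\Big(1-\frac{\|e\|^2}{\|e\|^2+\|f\|^2}\Big)f
= -\frac{\scal{f}{y}-\scal{e}{x}}{\|e\|^2+\|f\|^2}f,
\end{equation}
which yields \cref{e:PranA}; the second slot is symmetric. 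For \cref{e:PranA^*}, I apply $\mathcal{A}^\dagger$ to $\mathcal{A}(T)=(Te,T^*f)$ using \cref{e:Adagger} and the identity $\scal{e}{T^*f}=\scal{f}{Te}$, then combine the two $f\otimes e$ contributions via the common-denominator simplification
\begin{equation}
\frac{1}{\|e\|^2(\|e\|^2+\|f\|^2)}+\frac{1}{\|f\|^2(\|e\|^2+\|f\|^2)}=\frac{1}{\|e\|^2\|f\|^2}.
\end{equation}

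For the three degenerate cases I would appeal directly to \cref{t:rest}. When $e=0$ and $f\neq 0$, applying $\mathcal{A}$ to $\|f\|^{-2}f\otimes x$ gives $(0,\scal{f}{f}\|f\|^{-2}x)=(0,x)$, and applying $\mathcal{A}^\dagger$ to $\mathcal{A}(T)=(0,T^*f)$ immediately reproduces $\|f\|^{-2}f\otimes(T^*f)$. The case $e\neq 0$, $f=0$ is handled symmetrically, and the case $e=f=0$ is trivial since $\mathcal{A}^\dagger=0$.

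There is no real obstacle here beyond bookkeeping; the only point requiring minor care is the cancellation that produces the clean coefficient $\scal{f}{y}-\scal{e}{x}$ (respectively $\scal{e}{x}-\scal{f}{y}$) in \cref{e:PranA}, which can equivalently be seen as the orthogonal projection formula $(y,x)\mapsto(y,x)-\tfrac{\scal{(y,x)}{(f,-e)}}{\|(f,-e)\|^2}(f,-e)$ predicted by \cref{p:therange}\cref{p:therange11}, providing a useful sanity check on the algebra.
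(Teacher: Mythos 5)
Your proposal is correct and follows essentially the same route as the paper: invoke the Moore--Penrose identities $\mA\mA^\dagger=P_{\ran\mA}$ and $\mA^\dagger\mA=P_{\ran\mA^\dagger}=P_{\ran\mA^*}$ (the paper cites \cite[Proposition~3.30(ii),(v)\&(vi)]{BC2017}), substitute the formulas for $\mA^\dagger$ from \cref{t:Adagger} and \cref{t:rest}, and simplify; your coefficient cancellations and the common-denominator identity match the paper's computation exactly.
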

\begin{proof}
Using \cite[Proposition~3.30(ii)]{BC2017} and 
\cref{e:Adagger}, we obtain 
{ 
for $e\neq 0$ and $f\neq 0$
}
\begin{subequations}
\begin{align}
&P_{\ran \mA}(y,x)\\ 
&= \mA\mA^\dagger(y,x)\\
&=\mA\bigg(\frac{1}{\|e\|^2}\Big( y\otimes e - 
\frac{\scal{f}{{y}}}{\|e\|^2+\|f\|^2} f\otimes e\Big) + 
\frac{1}{\|f\|^2}\Big( f \otimes x - 
\frac{\scal{e}{x}}{\|e\|^2+\|f\|^2} f\otimes e\Big)\bigg)\\
&= \bigg(\frac{1}{\|e\|^2}\Big( y\otimes e - 
\frac{\scal{f}{{y}}}{\|e\|^2+\|f\|^2} f\otimes e\Big)e
+ \frac{1}{\|f\|^2}\Big( f \otimes x - 
\frac{\scal{e}{x}}{\|e\|^2+\|f\|^2} f\otimes e\Big)e,\\
&\quad 
\frac{1}{\|e\|^2}\Big( y\otimes e - 
\frac{\scal{f}{{y}}}{\|e\|^2+\|f\|^2} f\otimes e\Big)^*f
+ \frac{1}{\|f\|^2}\Big( f \otimes x - 
\frac{\scal{e}{x}}{\|e\|^2+\|f\|^2} f\otimes e\Big)^*f\bigg) \\
&= \bigg(\frac{1}{\|e\|^2}\Big( \scal{e}{e}y - 
\frac{\scal{f}{{y}}}{\|e\|^2+\|f\|^2} \scal{e}{e}f\Big) 
+ \frac{1}{\|f\|^2}\Big( \scal{x}{e}f - 
\frac{\scal{e}{x}}{\|e\|^2+\|f\|^2} \scal{e}{e}f\Big),\\
&\quad 
\frac{1}{\|e\|^2}\Big( \scal{y}{f} e - 
\frac{\scal{f}{{y}}}{\|e\|^2+\|f\|^2} \scal{f}{f} e\Big)
+ \frac{1}{\|f\|^2}\Big( \scal{f}{f} x - 
\frac{\scal{e}{x}}{\|e\|^2+\|f\|^2} \scal{f}{f} e\Big)\bigg)
\\
&= 
\bigg(y - \frac{\scal{f}{{y}}}{\|e\|^2+\|f\|^2}f
+ \frac{\scal{e}{x}}{\|f\|^2}f - \frac{\scal{e}{x}\|e\|^2}{\|f\|^2
\big(\|e\|^2+\|f\|^2 \big)}f , \\
&\quad x - \frac{\scal{e}{{x}}}{\|e\|^2+\|f\|^2}e
+ \frac{\scal{f}{y}}{\|e\|^2}e - \frac{\scal{f}{y}\|f\|^2}{\|e\|^2
\big(\|e\|^2+\|f\|^2 \big)}e
\bigg)\\
&= 
\bigg(
y + \frac{-\scal{f}{y}\|f\|^2+ \scal{e}{x}\big(\|e\|^2+\|f\|^2\big) 
- \scal{e}{x}\|e\|^2}{\|f\|^2\big(\|e\|^2+\|f\|^2\big)} f,
\\
&\quad x + \frac{-\scal{e}{x}\|e\|^2+ 
\scal{f}{y} \big(\|e\|^2+\|f\|^2\big) 
- \scal{f}{y}\|f\|^2}{\|e\|^2\big(\|e\|^2+\|f\|^2\big)} e 
\bigg)\\
&= \bigg( y - \frac{\scal{f}{y}  - \scal{e}{x}}{\|e\|^2+\|f\|^2} f, 
x - \frac{\scal{e}{x}  - \scal{f}{y}}{\|e\|^2+\|f\|^2} e
\bigg)
\end{align}
\end{subequations}
which verifies \cref{e:PranA}. 

Next, using \cite[Proposition~3.30(v)\&(vi)]{BC2017} and 
\cref{e:Adagger}, we have 
\begin{subequations}
\begin{align}
&P_{\ran \mA^*}(T)
= P_{\ran \mA^\dagger}(T)
= \mA^\dagger \mA(T)
= A^\dagger(Te,T^*f)\\
&= 
\frac{1}{\|e\|^2}\Big( (Te) \otimes e - 
\frac{\scal{f}{{Te}}}{\|e\|^2+\|f\|^2} f\otimes e\Big) + 
\frac{1}{\|f\|^2}\Big( f \otimes (T^*f) - 
\frac{\scal{e}{T^*f}}{\|e\|^2+\|f\|^2} f\otimes e\Big) \\
&= 
\frac{1}{\|e\|^2} (Te) \otimes e
+ \frac{1}{\|f\|^2}f\otimes(T^*f) 
- \frac{\scal{f}{{Te}}}{\|e\|^2+\|f\|^2}
\Big(\frac{1}{\|e\|^2} + \frac{1}{\|f\|^2} \Big) f\otimes e\\
&= 
\frac{1}{\|e\|^2} (Te) \otimes e
+ \frac{1}{\|f\|^2}f\otimes(T^*f)
- \frac{\scal{f}{{Te}}}{\|e\|^2\|f\|^2} f\otimes e
\end{align}
\end{subequations}
which establishes \cref{e:PranA^*}. 

If $e=0$ and $f\neq 0$,
then 
\begin{equation}
\mA\mA^\dagger(y,x) = \mA(\|f\|^{-2}f\otimes x)
= \|f\|^{-2}(0,(f\otimes x)^*f)
= \|f\|^{-2}(0,\scal{f}{f}x)=(0,x)
\end{equation}
and 
\begin{equation}
\mA^\dagger\mA(T) = 
\mA^\dagger(0,T^*f) = \frac{1}{\|f\|^2}f\otimes(T^*f).
\end{equation}

The case when $e\neq 0$ and $f=0$ is treated 
similarly. 

Finally, if $e=0$ and $f=0$, then $\mA^\dagger=0$ and 
the result follows.
\end{proof}

\begin{theorem}\textbf{(main projection theorem)}
\label{t:210416b} 
Let $(s,r)\in Y\times X$ and set 
$(\bar{s},\bar{r})=P_{\ran\mA}(s,r)$. 
Then 
\begin{equation}
C := \mA^{-1}(\bar{s},\bar{r}) 
= \menge{T\in\mathcal{H}}{Te = \bar{s} \text{\;and\;}
T^* f = \bar{r}}\neq\varnothing. 
\end{equation}
Let $T\in\mH$. 
If $e\neq 0$ and $f\neq 0$, then 
\begin{subequations}
\label{e:P_C}
\begin{align}
P_C(T) &= 
T  - \frac{1}{\|e\|^2}
\Big( 
(Te-s)\otimes e
-\frac{\scal{f}{Te-s}}{\|e\|^2 + \|f\|^2}f\otimes e
\Big)\\
&\qquad
- \frac{1}{\|f\|^2}
\Big( 
f\otimes (T^*f-r)
-\frac{\scal{e}{T^*f-r}}{\|e\|^2 + \|f\|^2}
f\otimes e  \Big).
\end{align}
\end{subequations}
Moreover, 
\begin{equation}
P_C(T) = 
\begin{cases}
T-\displaystyle \frac{1}{\|f\|^2}f\otimes(T^*f-r), &\text{if $e=0$ and $f\neq 0$;}\\[+3mm]
T-\displaystyle \frac{1}{\|e\|^2}(Te-s)\otimes e, &\text{if $e\neq 0$ and $f=0$;}\\[+3mm]
T, &\text{if $e=0$ and $f=0$.}
\end{cases}
\end{equation}
\end{theorem}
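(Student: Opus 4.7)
The plan is to invoke \cref{p:210416a} with $A := \mA$, $b := (s,r)$, $X$ replaced by $\mH$, and $Y$ replaced by $Y\times X$. The closed-range hypothesis is exactly what \cref{p:therange} supplies, so the result applies in all four cases considered. The nonemptiness of $C$ is immediate from the fact that $(\bar{s},\bar{r}) = P_{\ran\mA}(s,r) \in \ran\mA$. The proposition then yields
\begin{equation}
P_C(T) = T - \mA^\dagger\bigl(\mA(T) - (s,r)\bigr)
= T - \mA^\dagger(Te - s,\, T^*f - r).
\end{equation}

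All that remains is to substitute the known formulas for $\mA^\dagger$ into the right-hand side. When $e\neq 0$ and $f\neq 0$, I would apply \cref{t:Adagger} with $(y,x)$ replaced by $(Te-s,\, T^*f-r)$; this yields the displayed formula \cref{e:P_C} directly, with no further manipulation required. For the three boundary cases (exactly one of $e,f$ is zero, or both are), I would apply the corresponding item of \cref{t:rest} to the same pair $(Te-s, T^*f-r)$; each substitution is a single line.

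Since the entire argument is a one-step reduction followed by substitution from the preceding theorems, there is essentially no obstacle. The only minor care needed is to record that $C = \mA^{-1}(\bar{s},\bar{r})$ is nonempty (so the projection is well defined), and to observe that the formula depends on $(s,r)$ rather than on $(\bar{s},\bar{r})$ --- this is precisely the content of \cref{p:210416a}, which is why it was isolated earlier.
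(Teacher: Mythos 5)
Your proposal is correct and follows essentially the same route as the paper: apply \cref{p:210416a} (with the closed range guaranteed by \cref{p:therange}) to get $P_C(T)=T-\mA^\dagger(Te-s,T^*f-r)$, then substitute \cref{e:Adagger} in the case $e\neq 0$, $f\neq 0$ and the relevant items of \cref{t:rest} in the degenerate cases. Nothing is missing.
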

\begin{proof}
Clearly, 
$C\neq\varnothing$.
Now \cref{p:210416a} and \cref{e:defA} 
yield
\begin{align}
P_C(T)
&= 
T-\mA^\dagger(\mA T-(s,r))
= 
T - \mA^\dagger(Te-s,T^*f -r). 
\end{align}

Now we consider 
{ 
all possible
}
cases.
If $e\neq 0$ and $f\neq 0$, then,
using \cref{e:Adagger}, 
\begin{align*}
P_C(T)
&= T-
\frac{1}{\|e\|^2}\Big( (Te-s) \otimes e - 
\frac{\scal{f}{{Te-s}}}{\|e\|^2+\|f\|^2} f\otimes e\Big) \\
&\qquad - 
\frac{1}{\|f\|^2}\Big( f \otimes (T^*f-r)
-  \frac{\scal{e}{T^*f-r}}{\|e\|^2+\|f\|^2} f\otimes e\Big) 
\end{align*}
as claimed. 

Next, 
if $e=0$ and $f\neq 0$, then using
\cref{t:rest}\cref{t:rest1} yields
\begin{align*}
P_C(T)
&= T- \frac{1}{\|f\|^2} f \otimes (T^*f-r). 
\end{align*}

Similarly, 
if $e\neq 0$ and $f= 0$, then using
\cref{t:rest}\cref{t:rest2} yields
\begin{align*}
P_C(T)
&= T- \frac{1}{\|e\|^2} (Te-s) \otimes e. 
\end{align*}
And finally, if $e=0$ and $f=0$, then 
$\mA^\dagger =0$ and hence  
$P_C(T) = T$.
\end{proof}

\begin{remark}
Consider \cref{t:210416b} and its notation.
{ 
If $(s,r)\in\ran\mA$, 
then 
$(\bar{s},\bar{r})=(s,r)$ and 
hence $C = \mA^{-1}(s,r)$ which covers
also the consistent case.
Note that 
}
the auxiliary function defined in \cref{e:auxf}
allows us to combine all four cases into 
\begin{subequations}
\label{e:P_Call4}
\begin{align}
P_C(T) &= 
T  - \frac{1}{\delta(\|e\|^2)}
\Big( 
(Te-s)\otimes e
-\frac{\scal{f}{Te-s}}{\delta(\|e\|^2 + \|f\|^2)}f\otimes e
\Big)\\
&\qquad
- \frac{1}{\delta(\|f\|^2)}
\Big( 
f\otimes (T^*f-r)
-\frac{\scal{e}{T^*f-r}}{\delta(\|e\|^2 + \|f\|^2)}
f\otimes e  \Big).
\end{align}
\end{subequations}
\end{remark}

The last two results in this section 
are inspired by \cite[Theorem~2.1]{Gluntetal1}
and \cite[Theorem on page~566]{Khoury}, respectively.
See also \cref{c:GHR} and \cref{ex:Takouda} below. 

\begin{corollary}
\label{c:thinkbig}
Suppose that $Y=X$, let $e\in X\smallsetminus\{0\}$, 
let $f\in X\smallsetminus\{0\}$, 
set 
\begin{equation}
\label{e:thinkbig1}
E:= \frac{1}{\|e\|^{2}}e\otimes e = P_{\,\RR e}
\quad\text{and}\quad
F:= \frac{1}{\|f\|^{2}}f\otimes f = P_{\,\RR f}, 
\end{equation}
and let $\gamma\in\RR$. 
Then 
\begin{equation}
C := \menge{T\in\mH}{Te=\gamma e\;\text{and}\;T^* f=\gamma f}
\neq\varnothing
\end{equation}
and 
\begin{equation}
(\forall T\in\mH)\quad P_C(T) = 
\gamma\Id+(\Id-F)(T-\gamma\Id)(\Id-E). 
\end{equation}
\end{corollary}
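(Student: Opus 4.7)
The plan is to specialize \cref{t:210416b} with $Y=X$, $s=\gamma e$, and $r=\gamma f$, and then to repackage the resulting expression using the rank-one projectors $E=P_{\RR e}$ and $F=P_{\RR f}$. Since both $e\neq 0$ and $f\neq 0$, \cref{p:therange}\cref{p:therange11} gives $\ran\mA=\{(f,-e)\}^\perp$. I would first verify consistency via
\begin{equation*}
\bscal{(\gamma e,\gamma f)}{(f,-e)} = \gamma\scal{e}{f}-\gamma\scal{f}{e}=0,
\end{equation*}
so $(\gamma e,\gamma f)\in\ran\mA$. Consequently $P_{\ran\mA}(\gamma e,\gamma f)=(\gamma e,\gamma f)$, whence $C=\mA^{-1}(\gamma e,\gamma f)$; in particular $C\neq\varnothing$, proving the first assertion.

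Next, I would substitute $s=\gamma e$ and $r=\gamma f$ into \cref{e:P_C} and translate each tensor factor using the rank-one calculus. The needed identities are $T(e\otimes e)=(Te)\otimes e$, $(f\otimes f)T=f\otimes(T^*f)$, and $(f\otimes w)(e\otimes e)=\scal{w}{e}(f\otimes e)$. Normalizing by $\|e\|^2$, $\|f\|^2$, and $\|e\|^2\|f\|^2$ respectively, these convert the three tensor products appearing in \cref{e:P_C} into $(T-\gamma\Id)E$, $F(T-\gamma\Id)$, and (up to a cross-term coefficient) $F(T-\gamma\Id)E$.

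The formula \cref{e:P_C} produces two cross-terms proportional to $f\otimes e$, with coefficients involving $\scal{f}{Te-\gamma e}$ and $\scal{e}{T^*f-\gamma f}$; since the Hilbert space is real, these inner products coincide, and the partial-fraction identity
\begin{equation*}
\frac{1}{\|e\|^2(\|e\|^2+\|f\|^2)}+\frac{1}{\|f\|^2(\|e\|^2+\|f\|^2)}=\frac{1}{\|e\|^2\|f\|^2}
\end{equation*}
collapses them into the single term $\tfrac{\scal{f}{Te-\gamma e}}{\|e\|^2\|f\|^2}\,f\otimes e = F(T-\gamma\Id)E$. Hence \cref{e:P_C} simplifies to $T-(T-\gamma\Id)E-F(T-\gamma\Id)+F(T-\gamma\Id)E$, which is precisely $\gamma\Id+(\Id-F)(T-\gamma\Id)(\Id-E)$ after a straightforward expansion.

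The main obstacle is purely bookkeeping rather than conceptual: one must recognize that the two distinct cross-terms in \cref{e:P_C} have to collapse into $F(T-\gamma\Id)E$, and this relies on both the adjointness identity $\scal{f}{Te-\gamma e}=\scal{e}{T^*f-\gamma f}$ and the partial-fraction cancellation above. Once this observation is in place, matching the simplified expression with the expansion of $\gamma\Id+(\Id-F)(T-\gamma\Id)(\Id-E)$ is immediate.
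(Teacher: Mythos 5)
Your proposal is correct and follows essentially the same route as the paper: both specialize \cref{e:P_C} of \cref{t:210416b} with $s=\gamma e$, $r=\gamma f$ and simplify via the rank-one operator identities to arrive at $\gamma\Id+(\Id-F)(T-\gamma\Id)(\Id-E)$. The only cosmetic differences are that the paper establishes nonemptiness by observing $\gamma\Id\in C$ and assumes $\|e\|=\|f\|=1$ without loss of generality, whereas you verify $(\gamma e,\gamma f)\in\ran\mA$ via \cref{p:therange}\cref{p:therange11} (a slightly more careful consistency check) and keep general norms, collapsing the two cross-terms through the partial-fraction identity.
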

\begin{proof}
The projection identities in \cref{e:thinkbig1}
follow from \cref{e:210419b}.
Note that $\gamma\Id \in C$ 
and hence $C\neq\varnothing$.
We may and do assume without loss of generality 
that $\|e\|=1=\|f\|$.

Now let $T\in\mH$. 
Applying \cref{e:P_C} 
with $r:=\gamma f$ and $s:=\gamma e$, 
we deduce that 
\begin{subequations}
\begin{align}
P_C(T)
&= 
T  - 
\Big( 
(Te-\gamma e)\otimes e
-\frac{\scal{f}{Te-\gamma e}}{2}f\otimes e
\Big)\\
&\qquad 
-\Big( 
f\otimes (T^*f-\gamma f)
-\frac{\scal{e}{T^*f-\gamma f}}{2}
f\otimes e  \Big)\\
&=
T-(Te)\otimes e + \gamma e\otimes e
+\frac{\scal{f}{Te}-\gamma\scal{f}{e}}{2}f\otimes e\\
&\qquad 
- f\otimes (T^*f) + \gamma f\otimes f 
+ \frac{\scal{Te}{f}-\gamma\scal{e}{f}}{2}f\otimes e\\
&=
T - (Te)\otimes e - f\otimes (T^*f)
+\gamma(E+F)
+\big(\scal{f}{Te}-\gamma\scal{e}{f}\big)f\otimes e\\
&= T - TE - FT + \gamma(E+F)
+\big(\scal{f}{Te}-\gamma\scal{e}{f}\big)f\otimes e\\
&=T-TE-FT+\gamma(E+F) +FTE -\gamma FE\\
&=\gamma\Id +T-TE-FT+FTE-\gamma\Id+\gamma E+\gamma F-\gamma FE\\
&=\gamma\Id + (\Id-F)T(\Id-E)-\gamma(\Id-F)(\Id-E)\\
&= \gamma\Id + (\Id-F)(T-\gamma\Id)(\Id-E)
\end{align}
as claimed.
\end{subequations}
\end{proof}

We conclude this section with a beautiful
projection formula that arises when
the last result is specialized even further. 

\begin{corollary}
\label{c:abstractTakouda}
Suppose that $Y=X$, let $e\in X\smallsetminus\{0\}$, 
and set 
\begin{equation}
E:= \frac{1}{\|e\|^{2}}e\otimes e = P_{\,\RR e}.
\end{equation}
Then 
\begin{equation}
C := \menge{T\in\mH}{Te=e=T^* e}
\neq\varnothing
\end{equation}
and 
\begin{equation}
(\forall T\in\mH)\quad P_C(T) = E+(\Id-E)T(\Id-E). 
\end{equation}
\end{corollary}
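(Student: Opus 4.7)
The plan is to derive this corollary as a direct specialization of \cref{c:thinkbig}. Setting $f := e$ and $\gamma := 1$ in that corollary immediately gives $F = \frac{1}{\|e\|^2} e \otimes e = E$, and the set $\menge{T\in\mH}{Te=e \text{ and } T^*e=e}$ is the required $C$, which is nonempty because $\Id \in C$.

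Applying the formula from \cref{c:thinkbig} then yields
\begin{equation}
P_C(T) = \Id + (\Id - E)(T - \Id)(\Id - E).
\end{equation}
The only remaining task is to simplify the right-hand side. I would expand the product and use that $\Id - E$ is a projection, so $(\Id - E)^2 = \Id - E$, to obtain
\begin{equation}
P_C(T) = \Id + (\Id - E) T (\Id - E) - (\Id - E) = E + (\Id - E) T (\Id - E),
\end{equation}
which is the claimed formula.

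There is no real obstacle here since all the hard work has been done in \cref{t:210416b} and \cref{c:thinkbig}; this is purely a matter of specialization and a one-line algebraic simplification using the idempotence of $\Id - E$.
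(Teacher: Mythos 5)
Your proposal is correct and follows essentially the same route as the paper: apply \cref{c:thinkbig} with $f=e$ and $\gamma=1$, then simplify $\Id+(\Id-E)(T-\Id)(\Id-E)$ using the idempotence of $\Id-E=P_{\{e\}^\perp}$. One tiny caveat: your remark that $C\neq\varnothing$ because $\Id\in C$ is questionable when $X$ is infinite-dimensional (the identity is not Hilbert--Schmidt); it is cleaner to note that nonemptiness is already part of the conclusion of \cref{c:thinkbig}, or that $E\in C$.
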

\begin{proof}
Let $T\in\mathcal{H}$. 
Applying \cref{c:thinkbig} 
with $f=e$ and $\gamma=1$, we obtain 
\begin{subequations}
\begin{align}
P_C(T) 
&= \Id + (\Id-E)(T-\Id)(\Id-E)\\
&= \Id+(\Id-E)T(\Id-E)-(\Id-E)^2\\
&= \Id+(\Id-E)T(\Id-E)-(\Id-E)\\
&= (\Id-E)T(\Id-E)+E
\end{align}
\end{subequations}
because $\Id-E=P_{\{e\}^\perp}$ is idempotent. 
\end{proof}

\section{Rectangular matrices}
\label{sec:matrices}

In this section, we specialize the results of 
\cref{sec:main} to 
\begin{equation}
\text{$X=\RR^n$ and $Y=\RR^m$,}
\end{equation}
which gives rise to 
\begin{equation}
\mH = \RR^{m\times n},
\end{equation}
the space of real $m\times n$ matrices. 
Given $u$ and $x$ in $\RR^n$, and $v$ and $y$ in $\RR^m$, we have 
$v\otimes u = vu^\intercal$, 
$(v\otimes u)x=vu^\intercal x = (u^\intercal x)v$, 
and $(v\otimes u)^*y = (v^\intercal y) u$. 
Corresponding to \cref{e:defA}, we have 
\begin{equation}
\mA\colon \RR^{m\times n}\to \RR^{m+n}\colon T\mapsto
\begin{bmatrix}
Te\\
T^\intercal f
\end{bmatrix}.
\end{equation}
The counterpart of \cref{e:A^*} reads
\begin{equation}
\mathcal{A}^*\colon \RR^{m+n}\to\RR^{m\times n}
\colon 
\begin{bmatrix}
y\\
x
\end{bmatrix}
\mapsto 
ye^\intercal + fx^\intercal.
\end{equation}

Translated to the matrix setting,
\cref{t:Adagger} and \cref{t:rest} turn into:
\begin{theorem}
Let $x\in\RR^n$ and $y\in\RR^m$. 
If $e\neq 0$ and $f\neq 0$, then 
\begin{equation}
\mathcal{A}^\dagger
\begin{bmatrix} y \\ x \end{bmatrix}
= 
\frac{1}{\|e\|^2}\Big( ye^\intercal - 
\frac{{f}^\intercal{{y}}}{\|e\|^2+\|f\|^2} f e^\intercal\Big) + 
\frac{1}{\|f\|^2}\Big( f x^\intercal - 
\frac{{e}^\intercal{x}}{\|e\|^2+\|f\|^2} f e^\intercal\Big). 
\end{equation}
Furthermore, 
\begin{equation}
\mathcal{A}^\dagger
\begin{bmatrix} y \\ x \end{bmatrix}
=
\begin{cases}
\displaystyle \frac{1}{\|f\|^2}fx^\intercal,
&\text{if $e=0$ and $f\neq 0$;}\\[+4mm]
\displaystyle \frac{1}{\|e\|^2}ye^\intercal,
&\text{if $e\neq 0$ and $f=0$;}\\[+4mm]
0,
&\text{if $e=0$ and $f=0$.}
\end{cases}
\end{equation}
\end{theorem}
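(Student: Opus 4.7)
The plan is to observe that this theorem is purely a translation of the already-established \cref{t:Adagger} and \cref{t:rest} from the abstract Hilbert--Schmidt setting into the finite-dimensional matrix setting, using the dictionary laid out at the beginning of \cref{sec:matrices}. There is no new mathematical content to prove; the task is simply to verify that each abstract symbol becomes its stated matrix counterpart.

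First, I would recall the dictionary: under the identifications $X=\RR^n$, $Y=\RR^m$, the space $\mathcal{H}$ of Hilbert--Schmidt operators coincides with $\RR^{m\times n}$, the rank-one operator $v\otimes u$ equals the outer product $vu^\intercal$, the adjoint $T^*$ becomes the transpose $T^\intercal$, and the real inner product $\scal{a}{b}$ becomes $a^\intercal b$. These identifications are standard and have already been recorded in the paragraph preceding the theorem.

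Next, for the case $e\neq 0$ and $f\neq 0$, I would apply \cref{t:Adagger} to the pair $(y,x)\in Y\times X$ and substitute the dictionary entries. Every occurrence of $y\otimes e$ becomes $ye^\intercal$, every occurrence of $f\otimes e$ becomes $fe^\intercal$, $f\otimes x$ becomes $fx^\intercal$, and the scalars $\scal{f}{y}$ and $\scal{e}{x}$ become $f^\intercal y$ and $e^\intercal x$. The formula in the theorem then follows verbatim from \cref{e:Adagger}.

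For the remaining three cases, I would likewise specialize \cref{t:rest}: item \cref{t:rest1} gives the $e=0$, $f\neq 0$ case as $\|f\|^{-2}fx^\intercal$; item \cref{t:rest2} gives the $e\neq 0$, $f=0$ case as $\|e\|^{-2}ye^\intercal$; and item \cref{t:rest3} gives the zero operator in the remaining case. Since this is a pure translation with no nontrivial step, there is no real obstacle beyond bookkeeping --- one just needs to be careful that the transpose/adjoint convention matches (which it does, since we are working over $\RR$).
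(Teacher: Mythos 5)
Your proposal is correct and matches the paper exactly: the paper offers no separate proof of this theorem, stating it as the translation of \cref{t:Adagger} and \cref{t:rest} to the matrix setting via the dictionary $v\otimes u = vu^\intercal$, $T^*=T^\intercal$, $\scal{a}{b}=a^\intercal b$ recorded at the start of \cref{sec:matrices}. Your case-by-case specialization is precisely this intended argument.
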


In turn, \cref{c:ranges} now states the following:
\begin{corollary}
Let $x\in\RR^n$, let $y\in\RR^m$, and let $T\in\RR^{m\times n}$.
If $e\neq 0$ and $f\neq 0$, then 
\begin{equation}
P_{\ran \mA}\begin{bmatrix}y\\x\end{bmatrix}
= \begin{bmatrix}
y\\ x
\end{bmatrix}
-\frac{f^\intercal y - e^\intercal x}{\|e\|^2+\|f\|^2}
\begin{bmatrix}
f\\ -e
\end{bmatrix}
\end{equation}
and 
\begin{equation}
P_{\ran \mA^*}(T)
= \frac{1}{\|e\|^2}Tee^\intercal 
+ \frac{1}{\|f\|^2}ff^\intercal T
-\frac{f^\intercal Te}{\|e\|^2\|f\|^2}fe^\intercal.
\end{equation}
Furthermore, 
\begin{equation}
P_{\ran \mA}\begin{bmatrix}y \\ x \end{bmatrix} =
\begin{cases}
\begin{bmatrix}0\\[-2mm]x\end{bmatrix}, &\text{if $e=0$ and $f\neq 0$;}\\[+4mm]
\begin{bmatrix}y\\[-1mm]0\end{bmatrix},  &\text{if $e\neq 0$ and $f= 0$;}\\[+4mm] 
\begin{bmatrix}0\\[-2mm]0\end{bmatrix}, &\text{if $e=0$ and $f= 0$;}
\end{cases}
\end{equation}
and 
\begin{equation}
P_{\ran \mA^*}(T) = 
\begin{cases}
\displaystyle \frac{1}{\|f\|^2}ff^\intercal T, &\text{if $e=0$ and $f\neq 0$;}\\[+4mm]
\displaystyle \frac{1}{\|e\|^2}Tee^\intercal, &\text{if $e\neq 0$ and $f= 0$;}\\[+4mm]
0, &\text{if $e=0$ and $f= 0$.}
\end{cases}
\end{equation}
\end{corollary}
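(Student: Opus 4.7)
The plan is to obtain this corollary as a direct matrix translation of \cref{c:ranges} from the previous section, using the dictionary $v\otimes u = vu^\intercal$, $\scal{a}{b} = a^\intercal b$, and $T^* = T^\intercal$ that was already set up at the start of \cref{sec:matrices}. Since \cref{c:ranges} establishes the four formulas in the Hilbert-Schmidt setting, essentially no new mathematical content is needed; the work is purely notational and a short algebraic rearrangement.

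First I would handle the case $e\neq 0$ and $f\neq 0$. Applying \cref{e:PranA} directly gives
\begin{equation*}
P_{\ran\mA}\begin{bmatrix}y\\x\end{bmatrix} = \begin{bmatrix}
y - \dfrac{f^\intercal y - e^\intercal x}{\|e\|^2+\|f\|^2}\,f\\[1mm]
x - \dfrac{e^\intercal x - f^\intercal y}{\|e\|^2+\|f\|^2}\,e
\end{bmatrix},
\end{equation*}
and factoring out the common scalar $(f^\intercal y - e^\intercal x)/(\|e\|^2+\|f\|^2)$ (noting the opposite sign in the second block) produces the stated column-vector form with $[f;-e]$. For the range of $\mA^*$ I would translate \cref{e:PranA^*}: the three summands $\tfrac{1}{\|e\|^2}(Te)\otimes e$, $\tfrac{1}{\|f\|^2}f\otimes (T^*f)$, and $-\tfrac{\scal{f}{Te}}{\|e\|^2\|f\|^2}f\otimes e$ become, under the dictionary above, $\tfrac{1}{\|e\|^2}Tee^\intercal$, $\tfrac{1}{\|f\|^2}ff^\intercal T$, and $-\tfrac{f^\intercal T e}{\|e\|^2\|f\|^2}fe^\intercal$ respectively, which is exactly the claimed formula.

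The three degenerate cases are immediate from the corresponding cases in \cref{c:ranges}: $f\otimes x$ translates to $fx^\intercal$ and $f\otimes(T^*f)$ translates to $ff^\intercal T$ (and symmetrically for the $f=0$ branch), while the $e=0=f$ case is trivially $0$.

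There is no real obstacle here; the only thing worth double-checking is the sign bookkeeping in the first identity, where the second block's numerator is $e^\intercal x - f^\intercal y$ rather than $f^\intercal y - e^\intercal x$, so that pulling the scalar factor in front of $[f;-e]$ (rather than $[f;e]$) produces the correct signs. Once that is verified, the proof reduces to a single sentence invoking \cref{c:ranges}.
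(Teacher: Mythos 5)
Your proposal is correct and matches the paper's approach exactly: the paper offers no separate argument for this corollary, presenting it as the direct specialization of \cref{c:ranges} under the dictionary $v\otimes u=vu^\intercal$, $\scal{a}{b}=a^\intercal b$, $T^*=T^\intercal$, which is precisely what you do (including the correct sign rearrangement yielding the factor $\begin{bmatrix}f\\-e\end{bmatrix}$).
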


Next, \cref{t:210416b} turns into the following result:

\begin{theorem}
\label{t:210418c}
Let $r\in\RR^n$, let $s\in\RR^m$, and set 
$[\bar{s},\bar{r}]^\intercal = P_{\ran\mA}[s,r]^\intercal$. 
Then
\begin{equation}
C := \menge{T\in\RR^{m\times n}}{Te=\bar{s}\;\text{and}\;T^\intercal f = \bar{r}}\neq\varnothing.
\end{equation}
Now let $T\in\RR^{m\times n}$.
If $e\neq 0$ and $f\neq 0$, then 
\begin{subequations}
\begin{align}
P_C(T) &= 
T  - \frac{1}{\|e\|^2}
\Big( 
(Te-s)e^\intercal
-\frac{{f^\intercal}(Te-s)}{\|e\|^2 + \|f\|^2}fe^\intercal
\Big)\\[+2mm]
&\qquad
- \frac{1}{\|f\|^2}
\Big( 
f(f^\intercal T-r^\intercal)
-\frac{{e^\intercal}{(T^\intercal f-r)}}{\|e\|^2 + \|f\|^2}
fe^\intercal  \Big).
\end{align}
\end{subequations}
Moreover, 
\begin{equation}
P_C(T) = 
\begin{cases}
T-\displaystyle \frac{1}{\|f\|^2}f(f^\intercal T-r^\intercal), &\text{if $e=0$ and $f\neq 0$;}\\[+3mm]
T-\displaystyle \frac{1}{\|e\|^2}(Te-s) e^\intercal, &\text{if $e\neq 0$ and $f=0$;}\\[+3mm]
T, &\text{if $e=0$ and $f=0$.}
\end{cases}
\end{equation}
\end{theorem}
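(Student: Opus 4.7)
The plan is to obtain this result as a direct specialization of the Hilbert-Schmidt projection theorem (\cref{t:210416b}) to the finite-dimensional case $X=\RR^n$, $Y=\RR^m$, $\mH=\RR^{m\times n}$. Under this identification, rank-one operators $v\otimes u$ become outer products $vu^\intercal$, the adjoint $T^*$ becomes the transpose $T^\intercal$, and inner products in $X$ and $Y$ become the Euclidean inner products $e^\intercal x$, $f^\intercal y$. These translations have already been recorded at the start of \cref{sec:matrices}, so the work essentially reduces to bookkeeping.

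First, I would verify $C\neq\varnothing$. Since $[\bar{s},\bar{r}]^\intercal = P_{\ran\mA}[s,r]^\intercal \in \ran\mA$ by construction, there exists $T\in\RR^{m\times n}$ with $\mA(T) = [\bar{s},\bar{r}]^\intercal$, i.e., $Te=\bar{s}$ and $T^\intercal f = \bar{r}$; this is exactly the nonemptiness assertion of \cref{t:210416b} transcribed to the matrix setting.

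Next, for the generic case $e\neq 0$ and $f\neq 0$, I would substitute the outer-product and transpose translations into \cref{e:P_C}. Specifically, $(Te-s)\otimes e = (Te-s)e^\intercal$ and $f\otimes(T^*f-r) = f(T^\intercal f - r)^\intercal = f(f^\intercal T - r^\intercal)$, while the scalar coefficients $\scal{f}{Te-s}$ and $\scal{e}{T^*f-r}$ become $f^\intercal(Te-s)$ and $e^\intercal(T^\intercal f - r)$ respectively. Collecting these substitutions reproduces the displayed formula verbatim. The three degenerate cases $(e=0,f\neq 0)$, $(e\neq 0,f=0)$, and $(e=0,f=0)$ are handled by the same transcription applied to the corresponding cases of \cref{t:210416b}.

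There is no genuine obstacle here beyond verifying that the outer-product/transpose conversions are applied consistently; the core analytical content, including closedness of $\ran\mA$ and the Moore-Penrose computation, has already been established in \cref{sec:main}. The proof can therefore be presented in just a few lines as an application of \cref{t:210416b}.
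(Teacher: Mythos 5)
Your proposal is correct and matches the paper's treatment exactly: the paper states \cref{t:210418c} as a direct specialization of \cref{t:210416b} to $X=\RR^n$, $Y=\RR^m$ using the dictionary $v\otimes u = vu^\intercal$, $T^*=T^\intercal$ given at the start of \cref{sec:matrices}, with no separate proof needed. Your transcription of the rank-one terms and scalar coefficients, and the nonemptiness argument via $[\bar{s},\bar{r}]^\intercal\in\ran\mA$, are precisely what the paper relies on.
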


Let us specialize \cref{t:210418c} further
to the following interesting case:

\begin{corollary}\label{c:rectrowsum}\textbf{(projection onto matrices 
with prescribed row/column sums)}\\
\label{c:210914a}
Suppose that $e=[1,1,\ldots,1]^\intercal\in\RR^n$ and that
$f=[1,1,\ldots,1]^\intercal\in\RR^m$. 
Let $r\in\RR^n$, let $s\in\RR^m$, and set 
$[\bar{s},\bar{r}]^\intercal = P_{\ran\mA}[s,r]^\intercal$. 
Then
\begin{equation}
C := \menge{T\in\RR^{m\times n}}{Te=\bar{s}\;\text{and}\;T^\intercal f = \bar{r}}\neq\varnothing, 
\end{equation}
and for every $T\in\RR^{m\times n}$, 
\begin{subequations}
\label{e:210914b}
\begin{align}
P_C(T) &= 
T  - \frac{1}{n}
\Big( 
(Te-s)e^\intercal
-\frac{{f^\intercal}(Te-s)}{n+m}fe^\intercal
\Big)\\[+2mm]
&\qquad
- \frac{1}{m}
\Big( 
f(f^\intercal T-r^\intercal)
-\frac{{e^\intercal}{(T^\intercal f-r)}}{n + m}
fe^\intercal  \Big).
\end{align}
\end{subequations}
\end{corollary}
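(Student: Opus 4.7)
The plan is to apply Theorem~\ref{t:210418c} verbatim with $e$ and $f$ taken to be the all-ones vectors in $\RR^n$ and $\RR^m$, respectively. Since $e\neq 0$ and $f\neq 0$, we fall into the generic branch of that theorem, which gives an explicit formula for $P_C$ in terms of $\|e\|^2$ and $\|f\|^2$. The nonemptiness of $C$ then transfers verbatim from the parent theorem.

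The only computation I would perform is $\|e\|^2 = n$ and $\|f\|^2 = m$. It is also worth observing, for the benefit of the reader, how the operator $\mA\colon T\mapsto(Te,T^\intercal f)$ acts for these specific vectors: $Te$ is the vector of row sums of $T$, while $T^\intercal f$ is the vector of column sums. Thus the set $C=\{T\in\RR^{m\times n}:Te=\bar s \text{ and } T^\intercal f=\bar r\}$ is precisely the set of real $m\times n$ matrices with prescribed row sums $\bar s$ and column sums $\bar r$, and it coincides with $\{T\,:\,Te=s,\,T^\intercal f=r\}$ whenever the problem is consistent, namely when $\mathbf{1}^\intercal s=\mathbf{1}^\intercal r$.

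Finally, I would substitute $\|e\|^2=n$ and $\|f\|^2=m$ into the displayed projection formula of Theorem~\ref{t:210418c} to obtain \cref{e:210914b} directly. There is no real obstacle here; this corollary is a direct specialization, and the entire proof is essentially one line once the two norms are computed. The only subtle point worth emphasizing is the role of the preliminary projection $P_{\ran\mA}$, which is precisely what allows the statement to accommodate the possibly inconsistent case $\mathbf{1}^\intercal s\neq \mathbf{1}^\intercal r$; in that event, an explicit expression for $(\bar s,\bar r)$ is obtained from \cref{e:PranA} specialized to the all-ones vectors $e$ and $f$.
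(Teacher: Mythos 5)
Your proposal is correct and matches the paper's (implicit) proof exactly: the corollary is obtained by applying \cref{t:210418c} with the all-ones vectors, noting $\|e\|^2=n$ and $\|f\|^2=m$, and substituting into the generic ($e\neq 0$, $f\neq 0$) branch. Your side remarks on the row/column-sum interpretation and on consistency via $P_{\ran\mA}$ are accurate and consistent with the paper's surrounding discussion (cf.\ \cref{r:Romero}).
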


{ 
\begin{remark}\textbf{(Romero; 1990)}
\label{r:Romero}
Consider \cref{c:210914a} and its notation.
Assume that $[s,r]^\intercal \in\ran \mA$, 
which is equivalent to requiring that 
$\scal{e}{r}=\scal{f}{s}$ 
(which is sometimes jokingly called the ``Fundamental 
Theorem of Accounting''). 
Then one verifies that the entries of the matrix in 
\cref{e:210914b} are given also expressed by 
\begin{equation}
\label{e:210914c}
\big(P_{C}(T)\big)_{i,j} = T_{i,j}
+ \frac{s_i-(Te)_i}{n}
+ \frac{r_j-(T^\intercal f)_j}{m} + \frac{{f^\intercal}{Te}-{e^\intercal}{r}}{mn}
\end{equation}
for every $i\in\{1,\ldots,m\}$ and $j\in\{1,\ldots,n\}$. 
Formula~\cref{e:210914c} was proved by Romero 
(see \cite[Corollary~2.1]{Romero}) who proved this result
using Lagrange multipliers and who has even a $K$-dimensional extension 
(where \cref{e:210914c} corresponds to $K=2$). 
We also refer the reader to \cite{CaRo} for using 
\cref{e:210914c} to compute the projection onto the transportation polytope.
\end{remark}
}

Next, \cref{c:thinkbig} turns into the following result:

\begin{corollary}\textbf{(Glunt-Hayden-Reams; 1998)} 
\cite[Theorem~2.1]{Gluntetal1} \label{c:GHR}
Suppose that $e$ and $f$ lie in $\RR^{n}\smallsetminus\{0\}$, 
set 
\begin{equation}
E:= \frac{1}{\|e\|^{2}}ee^\intercal 
\quad\text{and}\quad
F:= \frac{1}{\|f\|^{2}}ff^\intercal, 
\end{equation}
and let $\gamma\in\RR$. 
Then 
\begin{equation}
C := \menge{T\in\RR^{n\times n}}{Te=\gamma e\;\text{and}\;T^\intercal f=\gamma f}
\neq\varnothing
\end{equation}
and 
\begin{equation}
(\forall T\in\mH)\quad P_C(T) = 
\gamma\Id+(\Id-F)(T-\gamma\Id)(\Id-E). 
\end{equation}
\end{corollary}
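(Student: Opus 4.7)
The plan is to obtain Corollary~\ref{c:GHR} as an immediate specialization of Corollary~\ref{c:thinkbig}, exploiting the dictionary between the abstract Hilbert-Schmidt framework and the matrix framework that was established at the start of Section~\ref{sec:matrices}. Concretely, with $X=Y=\RR^n$ we have $\mH=\RR^{n\times n}$, rank-one operators $v\otimes u$ become outer products $vu^\intercal$, and adjoints become transposes.

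First, I would match up the hypotheses: the assumption that $e,f\in\RR^n\smallsetminus\{0\}$ is exactly what is required in Corollary~\ref{c:thinkbig}, and the definitions
\begin{equation*}
E=\frac{1}{\|e\|^2}ee^\intercal,\qquad F=\frac{1}{\|f\|^2}ff^\intercal
\end{equation*}
coincide with $\frac{1}{\|e\|^2}e\otimes e$ and $\frac{1}{\|f\|^2}f\otimes f$ respectively; by \cref{e:210419b} these are precisely $P_{\,\RR e}$ and $P_{\,\RR f}$. Moreover, since $T^*=T^\intercal$ in this setting, the set
\begin{equation*}
C=\menge{T\in\RR^{n\times n}}{Te=\gamma e\text{ and }T^\intercal f=\gamma f}
\end{equation*}
is literally the set appearing in Corollary~\ref{c:thinkbig}.

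Then I would invoke Corollary~\ref{c:thinkbig} directly to conclude that $C\neq\varnothing$ and that
\begin{equation*}
P_C(T)=\gamma\Id+(\Id-F)(T-\gamma\Id)(\Id-E)
\end{equation*}
for every $T\in\RR^{n\times n}$, where $\Id$ is simply interpreted as the $n\times n$ identity matrix (i.e., the identity operator on $\RR^n$).

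There is no genuine obstacle here: the substantive work has already been carried out at the Hilbert-Schmidt level in Corollary~\ref{c:thinkbig}, and the present corollary is a pure transcription into matrix notation. The only thing to double-check is notational consistency, namely that $v\otimes u$ and $vu^\intercal$ agree both as operators and under the inner product, which the preamble to Section~\ref{sec:matrices} has already verified.
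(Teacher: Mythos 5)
Your proposal is correct and matches the paper exactly: the paper presents \cref{c:GHR} as an immediate specialization of \cref{c:thinkbig} to $X=Y=\RR^n$, $\mH=\RR^{n\times n}$ (introduced by the phrase ``\cref{c:thinkbig} turns into the following result''), with no further proof beyond the dictionary $v\otimes u=vu^\intercal$, $T^*=T^\intercal$ set up at the start of \cref{sec:matrices}. Nothing is missing.
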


We conclude this section with a particularization of \cref{c:abstractTakouda} which immediately follows
when $X=Y=\RR^n$ and thus $\mH = \RR^{n\times n}$: 

\begin{corollary}
\label{c:concreteTakouda}
Suppose that $e\in \RR^n\smallsetminus\{0\}$, 
and set 
\begin{equation}
E:= \frac{1}{\|e\|^{2}}ee^\intercal. 
\end{equation}
Then 
\begin{equation}
C := \menge{T\in\RR^{n\times n}}{Te=e=T^\intercal e}
\neq\varnothing
\end{equation}
and 
\begin{equation}
(\forall T\in\RR^{n\times n})\quad P_C(T) = E+(\Id-E)T(\Id-E). 
\end{equation}
\end{corollary}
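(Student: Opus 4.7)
The plan is straightforward: this corollary is the finite-dimensional matrix specialization of the abstract Corollary~\ref{c:abstractTakouda}, so most of the work is already done. I would simply invoke that result after matching up the notation between the Hilbert--Schmidt setting of Section~\ref{sec:main} and the concrete matrix setting of Section~\ref{sec:matrices}.

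First, I would record the standing translation fixed at the start of the section: for $u,v\in\RR^n$, the rank-one operator $v\otimes u$ becomes the outer product $vu^\intercal$, and the Hilbert-space adjoint $T^*$ becomes the transpose $T^\intercal$. Under these identifications one has $\mathcal{H}=\RR^{n\times n}$; the operator $E=\tfrac{1}{\|e\|^2}\,e\otimes e$ reads as $E=\tfrac{1}{\|e\|^2}ee^\intercal$, which is the orthogonal projection $P_{\RR e}$ by \cref{e:210419b}; and the constraint set $\{T\in\mathcal{H}\mid Te=e=T^*e\}$ coincides with $\{T\in\RR^{n\times n}\mid Te=e=T^\intercal e\}$. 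Applying \cref{c:abstractTakouda} then gives both $C\neq\varnothing$ and $P_C(T)=E+(\Id-E)T(\Id-E)$ verbatim.

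As an alternative independent check (if one preferred not to route through the Hilbert-space version), one could specialize \cref{c:GHR} with $f=e$ and $\gamma=1$ to get
\begin{equation*}
P_C(T)=\Id+(\Id-E)(T-\Id)(\Id-E),
\end{equation*}
and then simplify using the fact that $\Id-E=P_{\{e\}^\perp}$ is an orthogonal projection, hence idempotent: expanding yields
\begin{equation*}
P_C(T)=\Id+(\Id-E)T(\Id-E)-(\Id-E)^2=\Id+(\Id-E)T(\Id-E)-(\Id-E)=E+(\Id-E)T(\Id-E).
\end{equation*}
This mirrors exactly the computation in the proof of \cref{c:abstractTakouda}.

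No substantive obstacle is anticipated: nonemptiness is witnessed by $T=\Id\in C$, and the only algebraic fact required beyond the invocation of the earlier result is the idempotence of $\Id-E$, which is immediate from $E$ being an orthogonal projection onto a one-dimensional subspace.
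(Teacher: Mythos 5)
Your proposal is correct and takes essentially the same route as the paper, which obtains this corollary as the immediate specialization of \cref{c:abstractTakouda} to $X=Y=\RR^n$ (so $\mH=\RR^{n\times n}$ and $T^*$ becomes $T^\intercal$). Your alternative check via \cref{c:GHR} with $f=e$, $\gamma=1$ simply reproduces the computation already carried out in the proof of \cref{c:abstractTakouda}, so it adds no genuinely different argument.
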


\begin{example}\textbf{(projection formula for 
generalized bistochastic matrices; 1998)}\\ 
(See \cite[Theorem on page~566]{Khoury} and 
\cite[Corollary~2.1]{Gluntetal1}.)
\label{ex:Takouda}
Set 
\begin{equation}
u:=[1,1,\ldots,1]^\intercal\in \RR^n,\;\;
C := \menge{T\in\RR^{n\times n}}{Tu=u=T^\intercal u}, 
\;\;\text{and}\;\;
J:=(1/n)uu^\intercal. 
\end{equation}
Then 
\begin{equation}
(\forall T\in\RR^{n\times n})\quad 
P_C(T)=J+(\Id-J)T(\Id-J). 
\end{equation}
\end{example}
\begin{proof}
Apply \cref{c:concreteTakouda} with $e=u$
for which  $\|e\|^2=n$. 
\end{proof}

\begin{remark} 
For some applications of \cref{ex:Takouda}, we 
refer the reader to \cite{Takouda05} and also
to the recent preprint 
\cite{ACT}.
\end{remark}

{ 
\begin{remark} 
A reviewer pointed out that projection algorithms
can also be employed to solve linear programming problems 
provided a strict complementary condition holds 
(see Nurminski's work \cite{Nurminski}). 
This does suggest a possibly interesting future project: explore 
whether the projections in this paper are useful in 
solving some linear programming problems on rectangular matrices
with prescribed row and column sums. 
\end{remark}
}

\section{Numerical experiments}

\label{sec:numerics}

We consider the problem of finding a rectangular matrix with prescribed row and column sums as well as some 
additional constraints on the entries of the matrix. 
To be specific and inspired by \cite{stack}, 
we seek a real matrix of size $m\times n = 4\times 5$ such that its 
row and column sums are equal to 
 $\bar s:=\begin{bmatrix}32,43,33,23\end{bmatrix}^\intercal$ and 
$\bar r:=\begin{bmatrix}24,18,37,27,25\end{bmatrix}^\intercal$, 
respectively. One solution featuring actually nonnegative integers to
this problem is given by
  \[
  \begin{array}{rrrrr|r}
    9&4&8&4&7&32 \\
    7&9&15&7&5&43 \\
    3&2&9&10&9&33 \\
    5&3&5&6&4&23 \\\hline
    24&18&37&27&25&131 
  \end{array}
  \]
Adopting the notation of \cref{c:rectrowsum}, we see that the set 
\begin{equation}
B := \menge{T\in\RR^{4\times 5}}{Te =\bar{s}\;\text{and}\; T^\intercal f = \bar{r}}\neq\varnothing
\end{equation}
is an affine subspace of $\RR^{4\times 5}$ and that 
an explicit formula for $P_B$ is available through \cref{c:rectrowsum}.
Next, we define the closed convex ``hyper box''
\begin{equation}
\label{e:Aold}
A:=\bigtimes_{\substack{i\in\{1,2,3,4\}\\j\in\{1,2,3,4,5\}}}\big [0,\min\{{\bar s}_i,{\bar r}_j\}\big]. 
\end{equation} 
For instance, the $(1,3)$-entry of any nonnegative integer solution must lie between $0$ and $32=\min\{32,37\}$; thus 
$A_{1,3} = [0,32]$.
The projection of a real number $\xi$ onto the interval 
$[0,\min({\bar s}_i,{\bar r}_j)]$ is given by 
$\max\{0,\min\{{\bar s}_i,{\bar r}_j,\xi\}\}$. 
Because $A$ is the Cartesian product of such intervals, 
the projection operator $P_A$ is nothing but the corresponding
product of interval projection operators. 

Our problem is thus to 
\begin{equation}
\label{e:lastone}
\text{find a matrix $T$ in $A\cap B$.}
\end{equation}

We shall tackle \cref{e:lastone} with three well known algorithms:
Douglas-Rachford (DR), Method of Alternating Projections (MAP), 
and Dykstra (Dyk). Here is a quick review of how these methods 
operate, for a given starting matrix 
$T_0\in\RR^{4\times 5}$ and a current matrix 
$T_k\in\RR^{4\times 5}$. 

DR updates via
\begin{equation}
\tag{DR}
T_{k+1} := T_{k} - P_A(T_k) + P_B(2P_A(T_k)-T_k),
\end{equation}
MAP updates via 
\begin{equation}
\tag{MAP}
T_{k+1} := P_B(P_A(T_k)), 
\end{equation}
and finally 
Dyk initializes also $R_0=0\in\RR^{4\times 5}$
and updates via
\begin{equation}
\tag{Dyk}
A_{k+1} := P_A(T_k+R_k), \;\;
R_{k+1} := T_k+R_k-A_{k+1},\;\; T_{k+1} := P_B(A_{k+1}).
\end{equation}
For all three algorithms, it is 
known that 
\begin{equation}
P_A(T_k) \to \text{some matrix in $A\cap B$;}
\end{equation}
in fact, Dyk satisfies even $P_A(T_k)\to P_{A\cap B}(T_0)$
{ 
 (see, e.g., \cite[Corollary~28.3, Corollary~5.26, and Theorem~30.7]{BC2017}).
}
Consequently, 
for each of the three algorithms, we will focus on the sequence 
\begin{equation}
(P_A(T_k))_{k\in\NN}
\end{equation}
which obviously lies in $A$ and which thus 
prompts the simple feasibility criterion given by 
\begin{equation}
\delta_k := \|P_A(T_k)-P_B(P_A(T_k))\|. 
\end{equation}

\subsection{The convex case}

\label{ss:NN}

\begin{figure}[H]
\centering
\includegraphics[width=0.7\textwidth]{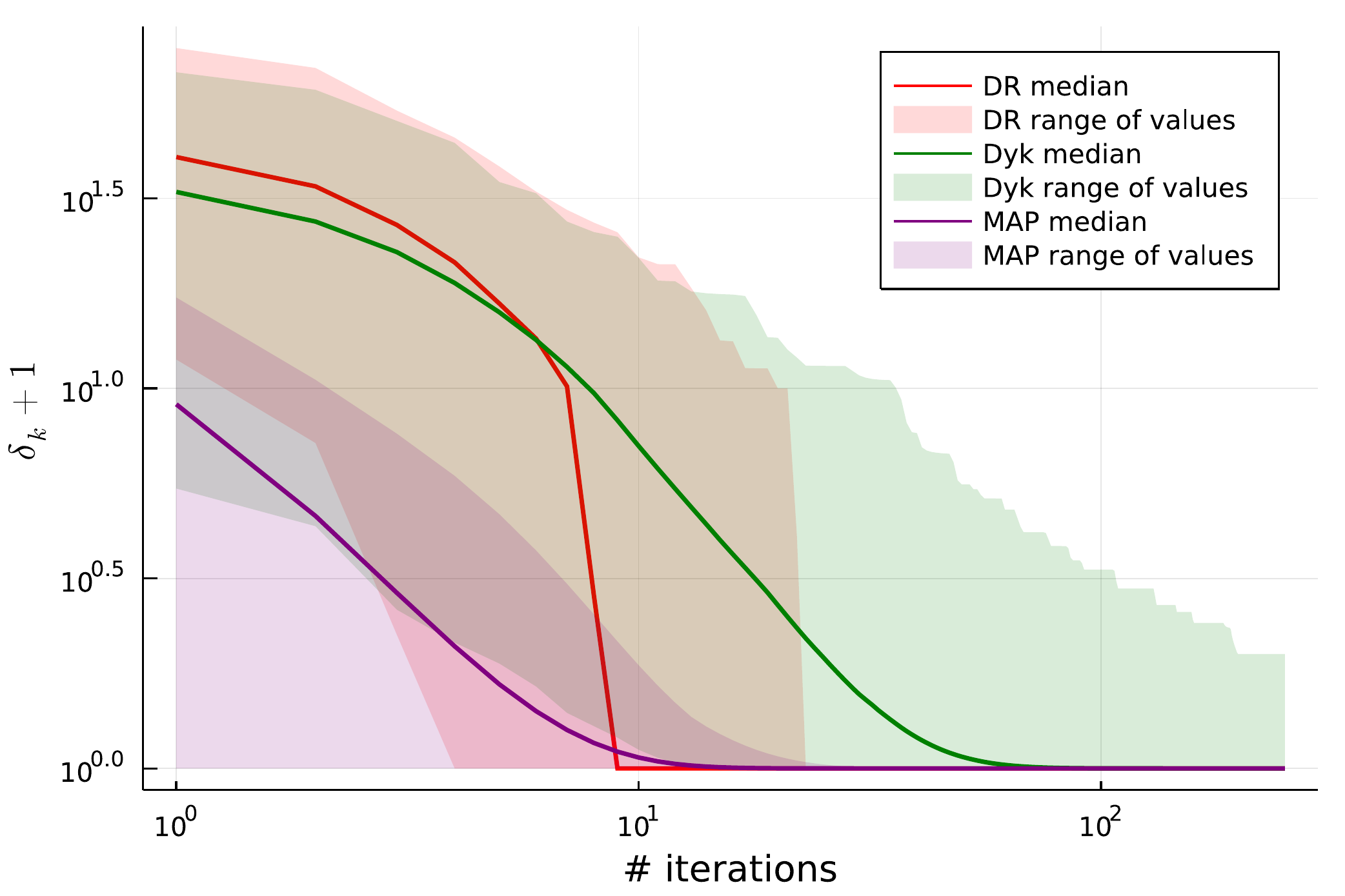}
\caption{Convergence of iterates with the nonnegative matrix constraint}
\label{f:NNcvg}
\end{figure}

Each algorithm is run for 250 iterations and 
for $100,000$
instances of $T_0$ 
that are produced with entries generated uniformly in $[-100,100]$.
The plot of the median value for $\delta_k$ of the iterates is shown in \cref{f:NNcvg}. 
The shaded region for each line represents the range of values attained at that iteration. 
{We assume an algorithm to have achieved feasibility when $\delta_k=0$.}
While MAP and DR always achieve feasibility, 
{as can be seen from the range of their values in \cref{f:NNcvg}},
DR achieves it the fastest in most cases. 
To support this, we order these algorithms in \cref{table:NN} according to their performance. 
The first column reports what percent of the instances achieved feasibility 
in the given order and if any of the algorithms did not converge. 
So the row labelled ``DR$<$MAP'' represents cases where DR achieved feasibility the fastest, 
MAP was second and Dyk did not converge. 
The second column report what percent of the first feasible matrices obtained were closest to 
the starting point  {$T_0$} in the given order. 
 {
This is done by measuring $\norm{T_0-T}$,
where $\norm{\cdot}$ is the operator norm, and $T_k$ is the first feasible matrices obtained
using a given algorithm (Dyk, DR or MAP)
}
We consider the algorithms tied, 
if the distance between the starting point and the estimate for both 
differs by a value less than or equal to $10^{-15}$. 
As is evident, a majority of the cases have DR in the lead for feasibility. 
However, the distance of these matrices is not as close as the ones given by 
MAP and Dyk when feasible. This is consistent with the fact that 
DR explores regions further away from the starting point to look for matrices 
and Dyk is built to achieve the least distance. 
It's also worth noting that at least one of these algorithms converges in every instance. (Convergence for all three algorithms
is guaranteed in theory.)

{ 
Last but not least, because our problem 
deals with \emph{unscaled} row and column sums, we point out that the sought-after projection may also 
be computed by using the algorithm proposed by Calvillo and Romero 
\cite{CaRo} which even converges in finitely many steps!
}

\begin{table}[h]
\centering\renewcommand{\arraystretch}{1.4}
\begin{tabular}{l|r|r}
 \toprule
Outcome  & \begin{tabular}[c]{@{}r@{}}using iterations\\ for feasibility\end{tabular} & \begin{tabular}[c]{@{}r@{}}using distance\\ from $T_0$\end{tabular}  \\
\midrule 
DR$=$MAP   & 3     & -                \\
\rowcolor{rowalt} 
DR$=$MAP$<$Dyk & 21    & -                \\
DR$<$MAP   & 21,205    & -                \\
\rowcolor{rowalt} 
MAP$<$DR   & 2     & 21,210         \\
DR$<$MAP$=$Dyk & 3     & -                \\
\rowcolor{rowalt} 
DR$<$MAP$<$Dyk & 78,708    & -                \\
DR$<$Dyk$<$MAP & 11     & -                \\
\rowcolor{rowalt} 
MAP$<$DR$<$Dyk & 47     & -                \\
Dyk$<$DR$<$MAP & -           & 11          \\
\rowcolor{rowalt} 
Dyk$<$MAP$<$DR & -           & 78,779         \\
\midrule
Total   & 100,000       & 100,000\\
\bottomrule
\end{tabular}
\caption{Results for nonnegative matrices }
\label{table:NN}
\end{table}

\subsection{The nonconvex case} 
\label{subsec:Int}

We exactly repeat the experiment of \cref{ss:NN} with 
the only difference being that the (new) set $A$ in this 
section is the intersection of the 
(old) set $A$ from the previous section (see \cref{e:Aold}) 
and $\mathbb{Z}^{4\times 5}$. This enforces nonnegative 
integer solutions. The projection operator $P_A$ is obtained 
by simply rounding after application of $P_A$ from \cref{ss:NN}.

In this nonconvex case, MAP fails to converge in most cases, 
whereas DR and Dyk converge to solutions as shown in \cref{f:Intcvg}. 
This is corroborated by \cref{table:Int,} where the rows where MAP converges corresponds to 
only a quarter of the total cases.
Again, DR achieves feasibility the fastest in more than half the cases, 
but Dykstra's algorithm gives the  {solution closest to $T_0$ among these,
as shown in the second column of \cref{table:Int}.} 
In this nonconvex case convergence of the any of the algorithms
is not guaranteed; in fact, 
there are several instances when no solution is found. 
However, in the $10^5$ runs considered, 
we did end up discovering several distinct solutions (see \cref{table:Intunique}). 
It turned out that all solutions found were distinct even 
across all three algorithms resulting in 
113622 different nonnegative integer solutions in total.

\begin{figure}[ht]
\centering
\includegraphics[width=0.7\textwidth]{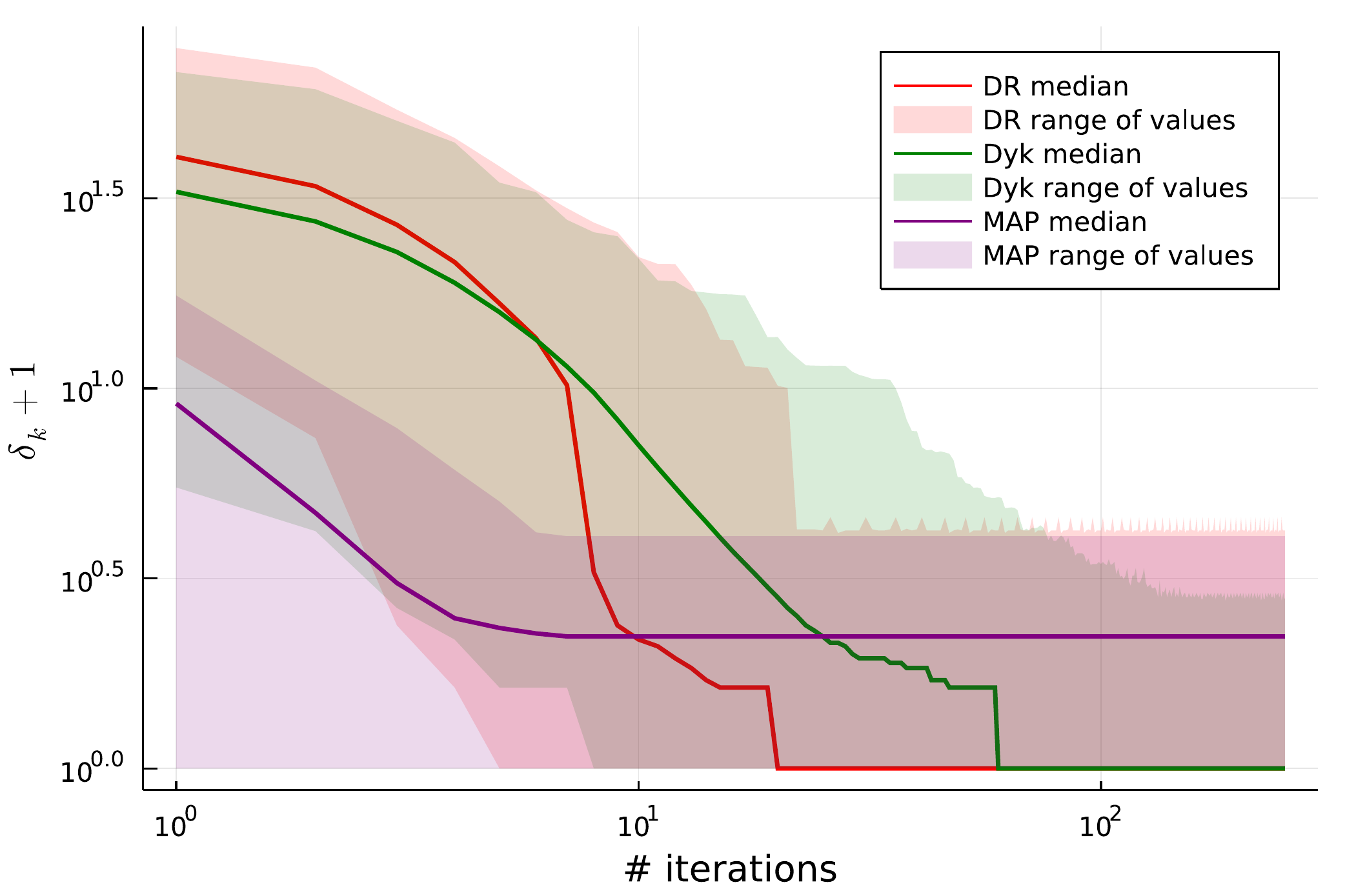}
\caption{Convergence of iterates with the integer matrix constraint}
\label{f:Intcvg}
\end{figure}


\begin{table}[H]
\centering
\centering\renewcommand{\arraystretch}{1.3}
\begin{tabular}{l|r|r}
 \toprule 
Outcome  & \begin{tabular}[c]{@{}r@{}}using iterations\\ for feasibility\end{tabular} & \begin{tabular}[c]{@{}r@{}}using distance\\ from $T_0$\end{tabular}  \\
\midrule
None& 11,694     & 11,694 \\
\rowcolor{rowalt}
DR    & 19,468 & 19,468          \\
MAP   & 25,380   & 25,380               \\
\rowcolor{rowalt}
Dyk   & 45     & 45         \\
DR$=$Dyk & 164     & -                \\
\rowcolor{rowalt}
DR$<$Dyk & 41,822    & -                \\
MAP$<$DR & 73     & 73                \\
\rowcolor{rowalt} 
MAP$<$Dyk & 69     & -                \\
Dyk$<$DR & 1156           & 43,142          \\
\rowcolor{rowalt}
Dyk$<$MAP & -           & 69        \\
MAP$<$Dyk$<$DR & 1     & -                \\
\rowcolor{rowalt} 
MAP$<$DR$<$Dyk & 128     & -                \\
Dyk$<$MAP$<$DR & -          & 129          \\ 
\midrule
Total   & 100,000       & 100,000\\
\bottomrule
\end{tabular}
\caption{Results for nonnegative integer matrices }

\label{table:Int}
\end{table}

\begin{table}[H]
\centering\renewcommand{\arraystretch}{1.4}
\begin{tabular}{l|r|r}
 \toprule
Algorithm    & Solutions found & Unique cases \\
\midrule 
\rowcolor{rowalt} 
DR   & 62,812     & 62,812                \\
MAP &  316   & 314                \\
\rowcolor{rowalt} 
Dyk   & 68,725    & 50,496                \\
\midrule
Total   & 131,853       & 113,622\\
\bottomrule
\end{tabular}
\caption{Integer matrix solutions found by the three algorithms}
\label{table:Intunique}
\end{table}

\small
\noindent
{\bf Declaration}

\footnotesize
\noindent
{\bf Availability of data and materials}
The datasets generated during and/or analysed during the current study are available from the corresponding author on reasonable request.\\
{\bf Competing interests}
The authors declare that they have no competing interests.\\
{\bf Funding}
The research of HHB and XW was partially supported 
by Discovery Grants from the Natural Sciences and Engineering Research Council of Canada.\\
{\bf Authors' contributions}
All authors contributed equally in writing this article. 
All authors read and approved the final manuscript.\\
{\bf Authors' acknowledgments}
{  
We thank the editor Aviv Gibali, three anonymous reviewers, and Matt Tam for 
constructive comments and several pointers to literature we were 
previously unaware of.
}

\small

\end{document}